%\Documentstyle[amscd,amssymb,amsthm,verbatim,12pt]{amsart}
\documentclass[12pt]{amsart}

\usepackage{amscd,amssymb,amsthm,verbatim}

\setlength{\oddsidemargin}{0.in}
\setlength{\evensidemargin}{0.in}
\setlength{\textwidth}{6.46in}
\setlength{\textheight}{8.8in}
\setlength{\parskip}{1ex}

\newcommand{\C}{{\mathbb C}}

\newcommand{\dvol}{\operatorname{dvol}}
\newcommand{\End}{\operatorname{End}}

\newcommand{\GL}{\operatorname{GL}}

\newcommand{\Hess}{\operatorname{Hess}}

\newcommand{\inj}{\operatorname{inj}}

\newcommand{\Lip}{\operatorname{Lip}}

\newcommand{\R}{{\mathbb R}}

\newcommand{\Ric}{\operatorname{Ric}}

\newcommand{\supp}{\operatorname{supp}}

\newcommand{\Z}{{\mathbb Z}}

\numberwithin{equation}{section}
\setcounter{tocdepth}{2}

\theoremstyle{plain}
\newtheorem{assumption}[equation]{Assumption}

\newtheorem{proposition}[equation]{Proposition}

\theoremstyle{definition}
\newtheorem{definition}[equation]{Definition}

\theoremstyle{definition}
\newtheorem{example}[equation]{Example}

\theoremstyle{definition}
\newtheorem{remark}[equation]{Remark}
\errorcontextlines=0
%\renewcommand{\rm}{\normalshape}

% \theoremstyle{example}
% \newtheorem{example}[equation]{Example}
% \errorcontextlines=0

\def\<{\langle}
\def\>{\rangle}
\def\({\left(}
\def\){\right)}

\begin{document}

\title{Ricci measure for some singular Riemannian metrics}
\author{John Lott}
\address{Department of Mathematics\\
University of California - Berkeley\\
Berkeley, CA  94720-3840\\ 
USA} \email{lott@berkeley.edu}

\thanks{Research partially supported
by NSF grant DMS-1207654}
\date{August 15, 2015}
\subjclass[2000]{}

\begin{abstract}
We define the Ricci curvature, as a measure, 
for certain singular torsion-free connections on the
tangent bundle of a manifold.
The definition uses an integral formula and vector-valued half-densities.
We give relevant examples in which the Ricci measure can be computed.
In the time dependent setting, we give a weak notion of a Ricci flow
solution on a manifold.
\end{abstract}

\maketitle

\section{Introduction} \label{sect1}

There has been much recent work about metric measure spaces with
lower Ricci bounds, particularly the Ricci limit spaces that arise 
as measured Gromov-Hausdorff limits of smooth manifolds with
a uniform lower Ricci bound. In this paper we address the question
of whether one can make sense of the Ricci curvature itself on singular
spaces.

From one's intuition about a two dimensional cone with total cone angle
less than $2 \pi$, the
Ricci curvature should exist at best as a measure.
One natural approach toward a weak notion of Ricci curvature is to
use an integral formula, such as the Bochner formula.
The Bochner identity says that if $\omega_1$ and $\omega_2$ are smooth
compactly supported
$1$-forms on a smooth Riemannian manifold $M$ then
\begin{equation} \label{1.1}
\langle \omega_1, \Ric(\omega_2) \rangle =
\int_M \left( \langle d \omega_1, d \omega_2 \rangle +
\langle d^* \omega_1, d^* \omega_2 \rangle -
\langle \nabla \omega_1, \nabla \omega_2 \rangle \right) \: \dvol.
\end{equation}
Equivalently,
\begin{equation} \label{1.2}
\langle \omega_1, \Ric(\omega_2) \rangle =
\int_M \sum_{i,j} \left( \nabla^i \omega_{1,i} \nabla^j \omega_{2,j}
- \nabla^i \omega_{1,j} \nabla^j \omega_{2,i} \right) \: \dvol.
\end{equation}

Now consider a possibly singular Riemannian metric on $M$.
In order to make sense of (\ref{1.2}), one's first attempt may be to
require that $\nabla \omega_1$ and $\nabla \omega_2$ are square integrable.
However, in the case of a two dimensional cone with total cone angle
less than $2 \pi$, if one requires square integrability
then one does not find any contribution
from the vertex of the cone.  That is, one would conclude that the
cone is Ricci flat, which seems wrong.
In order to see the curvature at the vertex, one
needs to allow for more general test forms. It is not 
immediately evident what
precise class of test forms should be allowed. A related fact is that
the expression for the Ricci tensor, using local coordinates, makes 
distributional sense if the Christoffel symbols are square integrable.  
However, this is not the case for the cone.

Our resolution to this problem is by first passing from 
$1$-forms to vector fields, and then passing to
vector-valued half-densities. For $V$ and $W$ vector-valued half-densities, 
we consider the quadratic form
\begin{equation} \label{1.3}
Q(V,W) = \int_M \sum_{i,j} \left[ \left( \nabla_i V^i \right) 
\left( \nabla_j W^j \right) -
\left( \nabla_i V^j \right) 
\left( \nabla_j W^i \right) \right].
\end{equation}
A compactly supported density on a
manifold can be integrated,
so two compactly supported half-densities can be multiplied and integrated. 
We require that $V$ and $W$
are compactly supported and Lipschitz regular on $M$.  
If one rewrote $Q$ using $1$-forms as in
(\ref{1.2}) then this would prescribe that the $1$-forms 
should lie in certain weighted spaces.  

One sees that (\ref{1.3})
does not involve the Riemannian metric directly, but
can be written entirely in terms of the connection.  Hence we work in 
the generality of torsion-free connections on the tangent bundle.
We also work with $C^{1,1}$-manifolds $M$, with an eye toward limit spaces;
it is known that Ricci limit spaces have a weak $C^{1,1}$-structure
\cite{Honda (2014)}. Then we say that a possibly singular connection is
{\em tame} if (\ref{1.3}) makes sense for all compactly supported 
Lipschitz vector-valued half-densities $V$ and $W$. We characterize
tame connections in terms of integrability properties of 
their Christoffel symbols.  We show
that a tame connection, with $Q$ bounded below, 
has a Ricci curvature that is well-defined
as a measure with values in $S^2(T^*M) \otimes {\mathcal D}^*$,
where ${\mathcal D}$ is the density line bundle.

We prove stability results for this Ricci measure.
We give examples to illustrate its meaning.

\begin{proposition} \label{1.4}
The Levi-Civita connection is tame and has a 
computable Ricci measure in the following cases :
Alexandrov surfaces,
Riemannian manifolds with boundary that are glued together, and
families of cones.
\end{proposition}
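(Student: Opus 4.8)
The plan is to handle the three families by one common four-step scheme and to reduce everything, in the end, to the model computation for a flat two-dimensional cone. Step one: record the local normal form of the metric near its singular set. Step two: compute the Christoffel symbols --- equivalently, the coefficients of the induced connection on vector-valued half-densities --- in those coordinates and check that they satisfy the integrability criterion characterizing tameness. Step three: verify that $Q$ is bounded below. Step four: evaluate $Q(V,W)$ by integrating by parts on the regular part and collecting the distributional terms produced along the singular set, so that $\Ric$ appears as the ordinary Ricci tensor times $\dvol$ on the regular part plus an explicit $S^2(T^*M)\otimes{\mathcal D}^*$-valued measure supported on the singular set. The benchmark is the flat cone of total angle $\th$: in polar coordinates its Christoffel symbols are of size $O(1/r)$, hence not square integrable but in $L^p_{\mathrm{loc}}$ for every $p<2$, and passing to half-densities is exactly what makes the nominally $r^{-2}$ integrand of $Q$ integrable, with cancellation of the borderline terms; the expected output is the (here vanishing) smooth Ricci tensor off the apex together with $(2\pi-\th)$ times a Dirac mass at the apex, paired with the degenerate metric.

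For Alexandrov surfaces I would invoke Alexandrov's representation of a surface of bounded integral curvature: in local isothermal coordinates $g=e^{2\varphi}(dx^2+dy^2)$ with $\De\varphi=-\omega$ as measures, where $\omega$ is the locally finite signed Gauss curvature measure, whose atoms are the conical points and off whose support $g$ is smooth. The Christoffel symbols are linear in $\partial_x\varphi$ and $\partial_y\varphi$, and a function on a surface whose Laplacian is a measure has gradient in $L^p_{\mathrm{loc}}$ for every $p<2$, with the sharper weighted behavior near the atoms of $\omega$ that the tameness criterion requires; this gives tameness, while boundedness below of $Q$ follows from local finiteness of the negative part of $\omega$. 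Integrating by parts in the isothermal chart, the regular part of $Q(V,W)$ reproduces $\int K\,g(V,W)\,\dvol$ --- using $\Ric=Kg$ in dimension two --- while the concentrated curvature contributes $\int g(V,W)\,d\omega$, so that $\Ric=\omega\otimes g$, the metric being read as a (possibly degenerate) section of $S^2(T^*M)$.

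For two smooth Riemannian manifolds with boundary glued along an isometry $\phi\colon\partial M_1\to\partial M_2$, I would use Fermi coordinates $(t,x)$ on each side, in which $g=dt^2+h_t$; across the gluing hypersurface $\Si$ the metric components are then Lipschitz with a corner in $t$, so the Christoffel symbols lie in $L^\infty_{\mathrm{loc}}$ with a jump along $\Si$, and both tameness and boundedness below of $Q$ are immediate. Integrating by parts on $M_1$ and $M_2$ separately yields the two smooth Ricci terms plus boundary integrals along $\Si$ which, via $\p_t h_t=-2\,\II$, combine into a layer term measuring $\II_1+\II_2$ --- the Israel-type junction data --- and this is the asserted Ricci measure, its component structure as a section of $S^2(T^*M)\otimes{\mathcal D}^*$ falling out of the computation. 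The case of families of cones --- metrics singular along a codimension-two submanifold $N$ and, near $N$, isometric to a smoothly (and, in the time-dependent setting, possibly evolving) varying family of two-dimensional cones over $N$ --- is handled by carrying out the model computation fiberwise over $N$: the transverse Christoffel symbols are $O(1/r)$ plus terms smooth along $N$, the tameness criterion is checked slice by slice, and the singular part of $\Ric$ comes out as $(2\pi-\th(p))$ times the induced measure on $N$ at $p\in N$, inserted into the normal $S^2$-directions.

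I expect the real work to be in the Alexandrov-surface case, for two reasons: the singular set $\supp\omega$ need not be a submanifold or even discrete, so there is no single clean normal form, and one must be sure that $Q$ delivers exactly $\omega\otimes g$ and not merely some measure. I would resolve both by approximation: mollify $\varphi$ to obtain smooth metrics $g_{\eps}$, for which the identity $Q_{g_{\eps}}(V,W)=\int K_{g_{\eps}}\,g_{\eps}(V,W)\,\dvol_{g_{\eps}}$ is classical, and pass to the limit using the stability results for the Ricci measure together with the weak convergence $K_{g_{\eps}}\,\dvol_{g_{\eps}}\rightharpoonup\omega$ built into Alexandrov's theory; the weighted $L^p$ control of $\partial\varphi$ is what keeps the approximating quadratic forms uniformly bounded and makes the limit legitimate. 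The same approximation-plus-stability mechanism, applied to a smoothing of the corner along $\Si$, respectively of the cone tips along $N$, gives a uniform treatment of the other two cases, so the crux is really the combination of these weighted Christoffel estimates with the stability input.
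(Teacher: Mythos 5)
Your four-step scheme (local normal form near the singular set, Christoffel symbols, the integrability criterion, then integration by parts to extract the singular layer) is exactly the paper's scheme, and your gluing computation and your codimension-two cone computation agree with Propositions \ref{4.9} and \ref{4.17}. But two steps do not close as written. First, for Alexandrov surfaces the tameness criterion of Proposition \ref{3.6} demands, besides local integrability of the $\Gamma^i_{\: \: jk}$, local integrability of the quadratic expression $\sum_{i,j}(\Gamma^i_{\: \: kl}\Gamma^j_{\: \: ji} - \Gamma^j_{\: \: ki}\Gamma^i_{\: \: lj})$, and your stated input --- $\partial_i\phi\in L^p_{loc}$ for every $p<2$, even with weighted refinements --- does not yield $L^1_{loc}$ of a product of two such gradients: near a conical point the individual quadratic terms behave like $r^{-2}$, which is not locally integrable in two dimensions. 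What actually saves the day is that for a conformal metric in dimension two this particular combination vanishes \emph{identically} (Proposition \ref{4.1}); your ``cancellation of the borderline terms'' gestures at this in the benchmark cone but you never isolate the mechanism, and without it the tameness verification fails. Once the cancellation is in hand, only $\partial_i\phi\in L^1_{loc}$ is needed.

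Second, your identification of the limit measure by mollifying $\phi$ and invoking ``the stability results for the Ricci measure'' is not available in this regime: Proposition \ref{3.20} requires the connection perturbations $T^{(r)}(v)$ to tend to $0$ in $L^\infty$, and Remark \ref{lq} relaxes this only to $L^{\max(2,q^*)}$ with $q^*>2$ when the Christoffel symbols lie in $L^q_{loc}$, $q>1$; mollifying $\phi$ near a conical point produces perturbations converging only in $L^p$, $p<2$, so neither hypothesis is met. The paper instead evaluates $Q$ directly: with the quadratic term gone, $Q=Q_1$ is linear in $\partial\phi$, and integrating by parts against the Lipschitz function $g(V,W)/\dvol_g$ gives $\int_M \frac{g(V,W)}{\dvol_g}\,dK$ with $dK=-(\partial_1^2+\partial_2^2)\phi\,dx^1dx^2$ (Proposition \ref{4.3}); no approximation or stability input is needed. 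The same objection applies to your proposal to smooth the cone tips along $N$. Finally, the paper's ``families of cones'' allows cone fibers of arbitrary dimension $n$, with the nontrivial conclusion that for $n>2$ there is no singular contribution at all (Propositions \ref{4.16} and \ref{4.17}); your treatment covers only the codimension-two case.
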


Passing to Ricci flow, 
one can use optimal transport to characterize supersolutions to the
Ricci flow equation on a manifold
\cite{Lott (2009),McCann-Topping (2010),Topping (2009)}.
There are also comparison principles for Ricci flow supersolutions
\cite{Bamler-Brendle (2014)},\cite[Section 2]{Ilmanen-Knopf (2003)}.
We give a weak notion of
a Ricci flow solution (as opposed to supersolution), in the sense that
the curvature tensor is not invoked, 
again on a fixed $C^{1,1}$-manifold.  
The idea is that the Ricci tensor appearing in
the Bochner integral formula can cancel the Ricci tensor appearing on the
right-hand side of the Ricci flow equation.  One could try  to formulate
such a time dependent integral identity just using the Bochner
equality for $1$-forms. However,
one would get a term coming from the time derivative of the volume form, which
unfortunately involves the scalar curvature.  Using
vector-valued half-densities instead,
this term does not appear.  We give examples of weak Ricci flow solutions,
along with a convergence result and a compactness result.

To mention some earlier work, Lebedeva and Petrunin
indicated the existence of a measure-valued curvature operator on an
Alexandrov space that is a noncollapsed limit of Riemannian manifolds
with a lower sectional curvature bound
\cite{Lebedeva-Petrunin (2008)}. The Ricci form
exists as a current on certain normal K\"ahler spaces and was 
used by Eyssidieux, Guedj and Zeriahi for K\"ahler-Einstein metrics 
\cite{Eyssidieux-Guedj-Zeriahi (2009)}. Naber gave a notion of
bounded Ricci curvature, in particular Ricci flatness,
for metric measure spaces \cite{Naber (2013)}.
Gigli discussed Ricci curvature for
certain metric measure spaces \cite[Section 3.6]{Gigli (2014)}.

The structure of this paper is as follows.  In Section \ref{sect2} we give
some background information.  Section \ref{sect3} has the definitions of 
tame connection and Ricci measure, and proves some properties of these.
Section \ref{sect4} gives some relevant examples. Section \ref{sect5} is
about weak Ricci flow solutions.

I thank the referee for helpful comments.

\section{Background} \label{sect2}

Let $M$ be an $n$-dimensional smooth manifold.
Let $FM$ denote the principal $\GL(n, \R)$-frame bundle of $M$.
For $c \in \R$, let $\rho_c :
\GL(n, \R) \rightarrow \GL(1, \R)$ be the homomorphism given by
$\rho_c(M) = |\det M|^{-c}$. There is an associated real
line bundle ${\mathcal D}^c = FM \times_{\rho_c} \R$, the 
$c$-density bundle. There is an isomorphism
${\mathcal D}^c \otimes {\mathcal D}^{c^\prime} \rightarrow
{\mathcal D}^{c+c^\prime}$ of line bundles. 
A section of ${\mathcal D}^c$ is called a $c$-density on $M$. 
A $1$-density is just called a density.  Compactly supported
densities on $M$ can be integrated, to give a linear functional
$\int_M : C_c(M; {\mathcal D}) \rightarrow \R$. There is a canonical
inner product on compactly supported half-densities,
given by $\langle f_1, f_2 \rangle = \int_M f_1 f_2$.

Let $\nabla$ be a torsion-free
connection on $TM$. There is an induced connection on ${\mathcal D}$.
Given a compactly supported vector-valued density $V$, i.e. a section 
$V \in C_c^\infty(M; TM \otimes
{\mathcal D})$, the integral $\int_M \sum_i \nabla_i V^i$ of its divergence
vanishes. With this fact, one can justify integration by parts.

The curvature of $\nabla$ is a section of
$\End(TM) \otimes \Lambda^2(T^*M)  = TM \otimes T^*M \otimes 
(T^*M \wedge T^*M)$. In terms of the latter description, 
the Ricci curvature of $\nabla$ is the covariant $2$-tensor field on $M$
obtained by contracting the $TM$ factor with the
first $T^*M$ factor in $(T^*M \wedge T^*M)$. In terms of indices,
$R_{jl} = \sum_i R^i_{\: \: jil}$. 

In this generality, the Ricci curvature
need not be symmetric. As $\nabla$ is torsion-free, the first
Bianchi identity holds and one finds that
$R_{jl} - R_{lj} = \sum_i R^i_{\: \: ijl}$. That is, the antisymmetric part of
the Ricci tensor is the negative of the curvature of the induced connection on
${\mathcal D}$, and represents an obstruction to the
local existence of a nonzero parallel density. Of course, if $\nabla$ is the
Levi-Civita connection of a Riemannian metric then there is a nonzero
parallel density, namely the Riemannian density.

Now let 
$M$ be a $C^{1,1}$-manifold.  This means that there is an
atlas $M = \bigcup_{\alpha} U_\alpha$ whose
transition maps $\phi_{\alpha \beta}$ have a first derivative
that is Lipschitz. We can take a maximal such atlas.
The preceding discussion of $c$-densities still makes sense in
this generality.

A $C^{1,1}$-manifold admits an underlying smooth structure,
in that we can find a subatlas with smooth transition
maps. Furthermore, any two such smooth structures are diffeomorphic.

\section{Tame connections and Ricci measure} \label{sect3}

In this section we give the notion of a tame connection and define
its Ricci measure.  In Subsection \ref{subsect3.1} we define tame
connections, characterize them in terms of the Christoffel symbols,
and prove stability under $L^\infty$-perturbations
of the connection. Subsection \ref{subsect3.2} has the definition of the
Ricci measure.  
In Subsection \ref{BE} we extend the notion of Ricci measure to the case of
weighted manifolds.
Subsection \ref{subsect3.3} is about 
singular Riemannian metrics
and Killing fields.

\subsection{Tame connections} \label{subsect3.1}

Let $M$ be an $n$-dimensional $C^{1,1}$-manifold.  
It makes sense to talk about the space
${\mathcal V}_{Lip}(M)$ of Lipschitz vector fields on $M$,
meaning Lipschitz-regular sections of $TM$.  Similarly, it makes
sense to talk about
Lipschitz vector-valued half-densities, i.e.
Lipschitz-regular sections of $TM \otimes {\mathcal D}^{\frac12}$.

Let ${\mathcal V}_{meas}(M)$ denote the measurable vector fields on $M$.
Let $\nabla$ be a measurable torsion-free connection on $TM$, i.e. an
$\R$-bilinear
map $\nabla : {\mathcal V}_{Lip}(M) \times {\mathcal V}_{Lip}(M) 
\rightarrow {\mathcal V}_{meas}(M)$ such that for 
$f \in \Lip(M)$ and $X,Y \in {\mathcal V}_{Lip}(M)$, we have
\begin{itemize}
\item
$\nabla_{fX} Y = f \nabla_X Y$,
\item
$\nabla_X (fY) = (Xf) Y + f \nabla_X Y$,
\item 
$\nabla_X Y - \nabla_Y X = [X,Y]$. 
\end{itemize}
Writing $\nabla_{\partial_i} \partial_j = \sum_k \Gamma^k_{\:\: ji}
\partial_k$, the Christoffel symbols $\Gamma^k_{\:\: ij}$ are measurable.

If $V$ is a vector-valued half-density then we can locally write it
as $V = \sum_j V^j \partial_j$, where $V^i$ is a locally defined
half-density. Then
$\nabla_i V = \sum_j (\nabla_i V^j) \partial_j$, where
$\nabla_i V^j$ is also a half-density.
Further writing $V^j = v^j \sqrt{dx^1 \ldots dx^n}$,
we have 
\begin{equation} \label{3.1}
\nabla_i V = \sum_j (\nabla_i v^j) \partial_j \otimes \sqrt{dx^1 \ldots dx^n},
\end{equation}
where
\begin{equation} \label{3.2}
\nabla_i v^j = \partial_i v^j + \sum_k \Gamma^j_{\: \: ki} v^k 
- \frac12 \sum_k \Gamma^k_{\: \: ki} v^j
\end{equation}

Given compactly supported 
Lipschitz vector-valued half-densities $V$ and $W$ on $M$, consider
the formal expression
\begin{equation} \label{3.3}
Q(V,W) = \int_M \sum_{i,j} \left[ \left( \nabla_i V^i \right) 
\left( \nabla_j W^j \right) -
\left( \nabla_i V^j \right) 
\left( \nabla_j W^i \right) \right]. 
\end{equation}
Note that the integrand of (\ref{3.3}) is a density on $M$.

\begin{definition} \label{3.4}
The connection $\nabla$ is {\em tame} if the integrand in (\ref{3.3}) is
integrable for all $V$ and $W$.
\end{definition}

If $n = 1$ then $Q$ vanishes identically.

\begin{remark} \label{3.5}
Suppose that $\nabla$ is the Levi-Civita connection of a Riemannian metric $g$.
We can use the Riemannian half-density 
$\sqrt{\dvol} = (\det g)^{\frac14} \sqrt{dx^1 \ldots dx^n}$
to trivialize ${\mathcal D}^{\frac12}$. 
Using this trivialization, 
there is an isometric isomorphism between 
vector-valued half-densities and $1$-forms, under which a
$1$-form $\omega = \sum_i \omega_i dx^i$ corresponds to a vector-valued
half-density $V = \sum_i v^i \partial_i \otimes \sqrt{dx^1 \ldots dx^n}$
with $v^i = \sum_j g^{ij} (\det g)^{\frac14} \omega_j$. In this case,
$Q$ could be computed using (\ref{1.2}), with the restriction on 
$\omega$ that each $v^i$ in the local description of 
its isomorphic vector-valued
half-density $V$ should be Lipschitz.
\end{remark}

We now characterize tameness of a connection in terms of its
Christoffel symbols.

\begin{proposition} \label{3.6}
Suppose that $n > 1$.
The connection $\nabla$ is tame if and only if in any 
coordinate neighborhood,
each $\Gamma^i_{\: \: jk}$ is locally integrable and each
$\sum_{i,j} (\Gamma^i_{\: \: kl} \Gamma^j_{\: \: ji} -
\Gamma^j_{\: \: ki} \Gamma^i_{\: \: lj})$ is locally integrable.
\end{proposition}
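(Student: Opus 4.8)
The plan is to expand the integrand of $Q(V,W)$ in local coordinates using \eqref{3.2}, separate the terms by their order of differentiation in $V$ and $W$, and then run a two-directional argument: tameness forces local integrability of certain combinations of the $\Gamma$'s (by testing against suitable $V, W$), and conversely those integrability conditions make every term of the expanded integrand locally integrable. Since tameness is about the integrand being in $L^1$ for \emph{all} compactly supported Lipschitz $V, W$, and these have local character, I may assume throughout that everything is supported in one coordinate chart, write $V = \sum_j v^j \partial_j \otimes \sqrt{dx^1 \cdots dx^n}$ and similarly for $W$, and treat $v^j, w^j$ as arbitrary compactly supported Lipschitz functions on an open set in $\R^n$.

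First I would substitute \eqref{3.2} into \eqref{3.3}. Each factor $\nabla_i V^i$ has the form $\partial_i v^i + \sum_k (\Gamma^i_{ki} - \tfrac12 \delta^i_i \cdots)$ — more precisely $\nabla_i v^i = \partial_i v^i + \sum_k \Gamma^i_{ki} v^k - \tfrac12 \sum_k \Gamma^k_{ki} v^i$, and summing over $i$ gives $\sum_i \partial_i v^i + \sum_{i,k}(\Gamma^i_{ki} - \tfrac12 \Gamma^k_{ki}) v^k$ (after relabeling), i.e. a "divergence-type" coefficient. The product $(\nabla_i V^i)(\nabla_j W^j) - (\nabla_i V^j)(\nabla_j W^i)$ then splits into: (a) a purely second-derivative-free-of-$\Gamma$ part, quadratic in $\partial v, \partial w$, namely $\sum_{i,j}(\partial_i v^i \partial_j w^j - \partial_i v^j \partial_j w^i)$, whose integral over any chart is identically zero because $\partial_i v^i \partial_j w^j - \partial_i v^j \partial_j w^i = \partial_i(v^i \partial_j w^j - v^j \partial_j w^i)$ is an exact divergence (this is the classical identity behind \eqref{1.2}); (b) cross terms linear in $\Gamma$ and linear in $(v, \partial w)$ or $(\partial v, w)$; and (c) terms quadratic in $\Gamma$, of the shape $\sum_{i,j}(\Gamma^i_{kl}\Gamma^j_{ji} - \Gamma^j_{ki}\Gamma^i_{lj}) v^k w^l$ plus similar contractions. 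The key structural observation is that the integrability of the full integrand, given that (a) integrates to zero, is equivalent to integrability of the sum of (b) and (c).

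For the "only if" direction I would choose test half-densities adapted to isolating each piece. To extract local integrability of an individual $\Gamma^i_{jk}$: pick $v, w$ so that $v^k$ is a fixed bump equal to $1$ on a given compact set while $\partial w$ is tuned so that the term (b) contribution picks out a single $\Gamma$ (e.g. $w^l$ affine near that set so $\partial_j w^l$ is a chosen constant), while arranging by support disjointness or by a second subtraction that the quadratic-in-$\Gamma$ terms (c) vanish on the relevant region; then $L^1$-ness of the integrand collapses to $L^1$-ness of that $\Gamma^i_{jk}$. Having obtained $\Gamma^i_{jk} \in L^1_{loc}$, all of (b) is automatically $L^1_{loc}$ (product of $L^1_{loc}$ with Lipschitz), so integrability of the whole integrand is equivalent to integrability of (c); and choosing $v, w$ with $v^k, w^l$ equal to chosen bump functions (so their derivatives can be killed off on the region of interest, again by taking differences of affine pieces) shows that each $\sum_{i,j}(\Gamma^i_{kl}\Gamma^j_{ji} - \Gamma^j_{ki}\Gamma^i_{lj})$ must be $L^1_{loc}$. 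For the "if" direction the argument is immediate once the terms are organized: (a) contributes zero, (b) is $L^1_{loc}$ times Lipschitz-with-compact-support hence $L^1$, and (c) is (a Lipschitz compactly supported combination of) the assumed-$L^1_{loc}$ quantities, hence $L^1$; summing, the integrand is in $L^1$.

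The main obstacle is the bookkeeping in the "only if" direction: the quadratic-in-$\Gamma$ part (c) is a contraction of two \emph{different} index patterns — one genuinely of the form $\Gamma^i_{kl}\Gamma^j_{ji}$ and one of the form $\Gamma^j_{ki}\Gamma^i_{lj}$ — and a priori only their \emph{difference} (summed over $i,j$) need be controlled, not each separately; so I must be careful that the test-function argument really produces the stated combination and not something stronger, and that the cross terms (b) genuinely decouple from (c) for a well-chosen family of $V, W$ (this is where the freedom to subtract two choices of $v, w$ with matching values but different derivatives, exploiting that (b) is linear in $\partial v$ while (c) is not, does the work). A secondary subtlety is the half-density normalization factors $-\tfrac12 \sum_k \Gamma^k_{ki}$ in \eqref{3.2}: these must be folded into the "divergence" coefficient consistently, but they do not affect the final characterization since $\Gamma^k_{ki} \in L^1_{loc}$ is subsumed by requiring every $\Gamma^i_{jk} \in L^1_{loc}$, and their quadratic contributions are of exactly the type already displayed in the second condition. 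Everything else is routine once the integrand is written out.
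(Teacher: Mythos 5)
Your overall strategy coincides with the paper's: expand $Q$ in a chart, split the integrand into a pure-derivative piece, a piece linear in $\Gamma$ (call it $Q_1$), and a piece quadratic in $\Gamma$ ($Q_2$), then test against constant and affine $v,w$. The "if" direction and the treatment of the pure-derivative piece are fine. But there is a genuine gap in your "only if" direction, in exactly the place you flag as the main obstacle: you propose to extract the individual $\Gamma^i_{\:jk}$ \emph{first}, by arranging that the quadratic terms $Q_2$ "vanish on the relevant region" either by support disjointness or by subtracting two test densities "with matching values but different derivatives." Neither mechanism works. The $Q_2$ integrand is $\sum_{k,l}v^k G_{kl} w^l$ with no derivatives of $v,w$, so the only way to make it vanish on an open set is to make $v$ or $w$ vanish there, which kills $Q_1$ there as well; and two Lipschitz functions that agree on an open set have equal derivatives a.e.\ on that set, so the proposed subtraction cannot produce "same values, different derivatives." The order is essentially forced to be the reverse of yours: take $v,w$ locally \emph{constant} first. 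Since every term of $Q_1$ carries a factor $\partial v$ or $\partial w$, the $Q_1$ integrand then vanishes identically on the region, and integrability of the full integrand gives integrability of each $G_{kl}=\sum_{i,j}(\Gamma^i_{\:kl}\Gamma^j_{\:ji}-\Gamma^j_{\:ki}\Gamma^i_{\:lj})$ outright. Only after that is the $Q_1$ integrand known to be integrable for general $v,w$, and affine test densities then extract the individual Christoffel symbols. (Your own device for getting the quadratic condition -- bump/constant test functions -- is precisely this first step; it just has to come first.)

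A second, smaller omission: you never use the hypothesis $n>1$, but it is indispensable. With $v$ affine and $w$ constant, the $Q_1$ integrand reduces to a constant-coefficient combination of the $\Gamma$'s in which the trace $\sum_i\Gamma^i_{\:il}$ appears with coefficient $\tfrac{n-1}{2}$ when one takes the derivative matrix to be the identity; disentangling the trace from the remaining terms, and hence isolating each individual $\Gamma^i_{\:kl}$, requires $\tfrac{n-1}{2}\neq 0$. For $n=1$ the quadratic form $Q$ vanishes identically and the characterization fails, so any correct proof must invoke $n>1$ at this point.
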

\begin{proof}
Suppose that $\nabla$ is tame. Let $U$ be a
coordinate neighborhood and choose $m \in U$. 
The point $m$ has a neighborhood $S$ with compact
closure in $U$.
Take $V$ and $W$ to have compact support in $U$. One finds that
\begin{equation} \label{3.7}
Q(V,W) = Q_1(V,W)+ Q_2(V,W),
\end{equation}
where
\begin{align} \label{3.8}
Q_1(V,W) = \int_U \sum_{i,k,l} & 
\left[ \frac12 (\partial_k v^k) \Gamma^i_{\: \: il} w^l
 + \frac12
v^k \Gamma^i_{\: \: il} (\partial_k w^l)
+ \frac12 (\partial_l v^k) \Gamma^i_{\: \: ik} w^l
+ \frac12 v^k \Gamma^i_{\: \: ik} (\partial_l w^l) \right. \\
& \left. - (\partial_i v^k) \Gamma^i_{\: \: kl} w^l -
v^k \Gamma^i_{\: \: kl} (\partial_i w^l) \right] dx^1 \ldots dx^n \notag
\end{align}
and
\begin{equation} \label{3.9}
Q_2(V,W) = \int_U \sum_{i,j,k,l} v^k \left( \Gamma^i_{\: \: kl}
\Gamma^j_{\: \: ji} -
\Gamma^j_{\: \: ki} \Gamma^i_{ \: \: lj} \right) w^l dx^1 \ldots dx^n.
\end{equation}

Given 
constant vectors $\{c^k\}_{k=1}^n$ and $\{d^l\}_{l=1}^n$, we can choose
$V$ and $W$ so that $v^k = c^k$ and $w^l = d^l$ in $S$.
Then the integrand of $Q_1$ vanishes in $S$. Hence
the integrability of the integrand of $Q(V, W)$ implies 
the integrability of 
$\sum_{i,j,k,l} c^k \left( \Gamma^i_{\: \: kl}
\Gamma^j_{\: \: ji} -
\Gamma^j_{\: \: ki} \Gamma^i_{ \: \: lj} \right) d^l$ in $S$,
for any choice of $\{c^k\}_{k=1}^n$ and $\{d^l\}_{l=1}^n$.
Letting $m$ and $S$ vary, this is equivalent to the local integrability of 
$\sum_{i,j} \left( \Gamma^i_{\: \: kl}
\Gamma^j_{\: \: ji} -
\Gamma^j_{\: \: ki} \Gamma^i_{ \: \: lj} \right)$ in $U$, for all
$k$ and $l$.

Returning to general $V$ and $W$ with 
compact support in $U$, we now know that the
integrand of $Q_2$ is integrable.  Hence the integrability of the
integrand of $Q$ implies the integrability of the integrand of
$Q_1$. 
Given a
constant matrix $\{C_r^{\: \: k} \}_{r,k=1}^n$ and a constant vector
$\{d^l \}_{l=1}^n$,
we can choose
$V$ and $W$ so that $v^k = \sum_r C_r^{\: \: k} x^r$ and 
$w^l = d^l$ in $S$. Then the integrand of $Q_1$, over $S$, becomes
\begin{equation} \label{3.10}
\sum_{i,k,l}
\left[ \frac12 C_k^{\: \: k} \Gamma^i_{\: \: il} d^l
+ \frac12 C_l^{\: \: k} \Gamma^i_{\: \: ik} d^l
- C_i^{\: \: k} \Gamma^i_{\: \: kl} d^l \right].
\end{equation}
Taking first $C_k^{\: \: l} = \delta_k^{\: \: l}$, we see
that $\frac{n-1}{2} \sum_i \Gamma^i_{\: \: il} d^l$ is integrable in
$S$ for
any choice of $\{d^l\}_{l=1}^n$. Hence $\sum_i \Gamma^i_{\: \: il}$
is integrable in $S$ for all $l$. It now follows from (\ref{3.10}) that 
$\sum_{i,k,l} C_i^{\: \: k} \Gamma^i_{\: \: kl} d^l$ is integrable
in $S$ for any choices of
$\{C_r^{\: \: k} \}_{r,k=1}^n$ and 
$\{d^l \}_{l=1}^n$.  Hence $\Gamma^i_{\: \: kl}$ is integrable in $S$
for all $i$, $k$ and $l$, so $\Gamma^i_{\: \: kl}$ is locally
integrable in $U$.

For the other direction of the proposition, suppose that in any 
coordinate neighborhood,
each $\Gamma^i_{\: \: jk}$ is locally integrable and each
$\sum_{i,j} (\Gamma^i_{\: \: kl} \Gamma^j_{\: \: ji} -
\Gamma^j_{\: \: ki} \Gamma^i_{\: \: lj})$ is locally integrable.
Given $V$ and $W$ with compact support, we can cover
$\supp(V) \cup \supp(W)$ by a finite number
$\{U_r\}_{r=1}^N$ of open sets, each
with compact closure in a coordinate neighborhood. 
Let $\{\phi_r\}_{r=1}^N$ be a subordinate
Lipschitz partition of unity.  Then
\begin{equation} \label{3.11}
Q(V,W) = \sum_{r=1}^N Q(\phi_r V, W).
\end{equation}
Looking at (\ref{3.8}) and (\ref{3.9}), we see that the integrand of
$Q(\phi_r V, W)$ has support in $U_r$. Then from (\ref{3.8}) and (\ref{3.9}),
we see that the integrand of $Q(\phi_r V, W)$ is integrable.
The proposition follows. 
\end{proof}

\begin{proposition} \label{3.12}
Suppose that we have a fixed collection of 
coordinate neighborhoods that cover $M$, in each of which
$\Gamma^i_{\: \: jk}$ is locally integrable and 
$\sum_{i,j} (\Gamma^i_{\: \: kl} \Gamma^j_{\: \: ji} -
\Gamma^j_{\: \: ki} \Gamma^i_{\: \: lj})$ is locally integrable.
Then 
in any other  
coordinate neighborhood, 
$\Gamma^i_{\: \: jk}$ is locally integrable and 
$\sum_{i,j} (\Gamma^i_{\: \: kl} \Gamma^j_{\: \: ji} -
\Gamma^j_{\: \: ki} \Gamma^i_{\: \: lj})$ is locally integrable.
\end{proposition}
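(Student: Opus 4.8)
The plan is to deduce Proposition~\ref{3.12} from Proposition~\ref{3.6} by passing through the notion of tameness, which is manifestly coordinate free. Assume $n > 1$ throughout; for $n = 1$ the combination $\sum_{i,j}(\Gamma^i_{\:\:kl}\Gamma^j_{\:\:ji} - \Gamma^j_{\:\:ki}\Gamma^i_{\:\:lj})$ vanishes identically, and local integrability of $\Gamma^1_{\:\:11}$ transforms correctly by the elementary transformation rule recalled below together with the $C^{1,1}$-regularity of the transition maps.

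The first point is that the proof of the ``if'' direction of Proposition~\ref{3.6} never really uses the integrability hypothesis in \emph{all} coordinate neighborhoods: it only uses it in coordinate neighborhoods that jointly cover $\supp(V)\cup\supp(W)$. So, given compactly supported Lipschitz vector-valued half-densities $V$ and $W$, I would use the hypothesis that the prescribed collection $\{U_\alpha\}$ covers the compact set $\supp(V)\cup\supp(W)$ to extract a finite family $\{U_r\}_{r=1}^N$ covering $\supp(V)\cup\supp(W)$ with each $\overline{U_r}$ compact and contained in some member $U_{\alpha(r)}$ of the collection. Taking a subordinate Lipschitz partition of unity $\{\phi_r\}$, one has $Q(V,W) = \sum_r Q(\phi_r V, W)$ exactly as in the proof of Proposition~\ref{3.6}, and --- via the local formulas (\ref{3.8}) and (\ref{3.9}) evaluated in the chart $U_{\alpha(r)}$ --- the integrand of $Q(\phi_r V, W)$ is supported in $U_r$ and is integrable there, since by hypothesis $\Gamma^i_{\:\:jk}$ and $\sum_{i,j}(\Gamma^i_{\:\:kl}\Gamma^j_{\:\:ji} - \Gamma^j_{\:\:ki}\Gamma^i_{\:\:lj})$ are locally integrable in $U_{\alpha(r)}$. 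Hence the integrand of $Q(V,W)$ is integrable for all $V$ and $W$; that is, $\nabla$ is tame.

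Tameness is a property of the quadratic form $Q$, and since the integrand of (\ref{3.3}) is intrinsically a density it does not refer to any choice of coordinates. Therefore I would now simply invoke the ``only if'' direction of Proposition~\ref{3.6}: tameness forces $\Gamma^i_{\:\:jk}$ and $\sum_{i,j}(\Gamma^i_{\:\:kl}\Gamma^j_{\:\:ji} - \Gamma^j_{\:\:ki}\Gamma^i_{\:\:lj})$ to be locally integrable in \emph{every} coordinate neighborhood, which is precisely the conclusion. On this route there is essentially no obstacle; the only thing to check carefully is that the cited half of Proposition~\ref{3.6} is genuinely local, so that it still applies when ``every coordinate neighborhood'' is weakened to ``the members of a fixed atlas''.

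If one prefers a self-contained argument that does not route through Proposition~\ref{3.6}, there is a direct one. Writing $J$ and $K$ for the Jacobian of a transition map and its inverse (continuous, hence locally bounded), the transformation rule reads $\widetilde\Gamma^i_{\:\:jk} = J^i_{\:\:a} K^b_{\:\:j} K^c_{\:\:k}\,\Gamma^a_{\:\:bc} + J^i_{\:\:a}\,\partial^2 x^a/(\partial\widetilde x^j\partial\widetilde x^k)$, and the last term lies in $L^\infty_{loc}$ by $C^{1,1}$-regularity; so local integrability of the $\Gamma$'s passes immediately to the $\widetilde\Gamma$'s. Substituting this rule into $\sum_{i,j}(\widetilde\Gamma^i_{\:\:kl}\widetilde\Gamma^j_{\:\:ji} - \widetilde\Gamma^j_{\:\:ki}\widetilde\Gamma^i_{\:\:lj})$ and expanding, the terms containing at most one factor of $\Gamma$ are products of an $L^\infty_{loc}$ quantity with an $L^1_{loc}$ quantity and hence lie in $L^1_{loc}$, while in the term quadratic in $\Gamma$ every factor $\sum_\bullet J^\bullet_{\:\:\bullet} K^\bullet_{\:\:\bullet}$ telescopes to a Kronecker delta and the expression collapses to $K^a_{\:\:k} K^b_{\:\:l}\sum_{i,j}(\Gamma^i_{\:\:ab}\Gamma^j_{\:\:ji} - \Gamma^j_{\:\:ai}\Gamma^i_{\:\:bj})$, i.e. the original combination transported as a $(0,2)$-tensor, which is in $L^1_{loc}$ by hypothesis. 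The one step needing care here is that telescoping, i.e. verifying that this particular contraction pattern of $\Gamma\otimes\Gamma$ is the one preserved by the ``tensorial part'' of the transformation law --- a short index computation.
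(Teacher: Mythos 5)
Your proposal is correct and follows essentially the same two routes the paper itself indicates: the indirect argument through tameness and Proposition \ref{3.6}, and the direct argument from the transformation formula (\ref{3.13}) using that the Jacobians are Lipschitz and the second derivatives of the transition maps are $L^\infty$. You simply carry out in detail (including the telescoping of the quadratic term, which checks out) what the paper's proof leaves as a sketch.
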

\begin{proof}
This follows indirectly from the proof of Proposition \ref{3.6}, but
can also be seen directly from the transformation formula
\begin{equation} \label{3.13}
\widetilde{\Gamma}^i_{\: \: jk} = 
\sum_{a,b,c} \frac{\partial y^i}{\partial x^a}
\frac{\partial x^b}{\partial y^j}
\frac{\partial x^c}{\partial y^k}
\Gamma^a_{\: \: bc} +
\sum_a \frac{\partial y^i}{\partial x^a} \cdot
\frac{\partial^2 x^a}{\partial y^j \partial y^k}
\end{equation}
for the Christoffel symbols under a change of coordinate
from $x$ to $y$, along with the fact that
$\frac{\partial y}{\partial x}$ and $\frac{\partial x}{\partial y}$ 
are Lipschitz, and
$\frac{\partial^2x^a}{\partial y^j \partial y^k}$ is $L^\infty$.
\end{proof}

We now show that tameness is preserved by bounded perturbations of the
connection.

\begin{proposition} \label{3.14}
Suppose that $\nabla$ is tame.  Suppose that $T \cong T^i_{\: \: jk}$ is
a measurable 
$(1,2)$-tensor field, symmetric in the lower two indices,
such that for all Lipschitz vector fields $v$,
the $(1,1)$-tensor field $T(v) = \sum_k T^i_{\: \: jk} v^k$ is a locally
bounded section of $\End(TM)$. Then $\nabla + T$ is tame.
\end{proposition}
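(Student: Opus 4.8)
The plan is to reduce to the Christoffel-symbol characterization of tameness. If $n = 1$ then $Q$ vanishes identically, so every connection is tame and there is nothing to prove; hence assume $n > 1$ and work in a fixed coordinate neighborhood $U$. Since $T$ is symmetric in its lower two indices, $\nabla + T$ is again a measurable torsion-free connection, with Christoffel symbols $\hGamma^i_{\: \: jk} = \Gamma^i_{\: \: jk} + T^i_{\: \: jk}$. By Proposition \ref{3.6} it suffices to show that each $\hGamma^i_{\: \: jk}$ is locally integrable on $U$ and that each $\sum_{i,j}(\hGamma^i_{\: \: kl}\hGamma^j_{\: \: ji} - \hGamma^j_{\: \: ki}\hGamma^i_{\: \: lj})$ is locally integrable on $U$.

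The key preliminary observation is that each component $T^i_{\: \: jk}$ lies in $L^\infty_{loc}(U)$. Indeed, in a $C^{1,1}$-atlas the coordinate vector fields $\partial_k$ are Lipschitz vector fields (their components transform by the Lipschitz matrices $\partial y / \partial x$), so by hypothesis each $T(\partial_k)$ is a locally bounded section of $\End(TM)$; its components in the chart are precisely the $T^i_{\: \: jk}$ for that fixed $k$, and local boundedness of an $\End(TM)$-section is coordinate independent because frame changes are locally bounded. Granting this, $\hGamma^i_{\: \: jk}$ is the sum of $\Gamma^i_{\: \: jk} \in L^1_{loc}(U)$ (tameness of $\nabla$, via Proposition \ref{3.6}) and $T^i_{\: \: jk} \in L^\infty_{loc}(U)$, hence lies in $L^1_{loc}(U)$. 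Expanding the quadratic expression,
\begin{align*}
\sum_{i,j}\left( \hGamma^i_{\: \: kl}\hGamma^j_{\: \: ji} - \hGamma^j_{\: \: ki}\hGamma^i_{\: \: lj}\right)
&= \sum_{i,j}\left( \Gamma^i_{\: \: kl}\Gamma^j_{\: \: ji} - \Gamma^j_{\: \: ki}\Gamma^i_{\: \: lj}\right) \\
&\quad + \sum_{i,j}\left( \Gamma^i_{\: \: kl} T^j_{\: \: ji} + T^i_{\: \: kl}\Gamma^j_{\: \: ji} - \Gamma^j_{\: \: ki} T^i_{\: \: lj} - T^j_{\: \: ki}\Gamma^i_{\: \: lj}\right) \\
&\quad + \sum_{i,j}\left( T^i_{\: \: kl} T^j_{\: \: ji} - T^j_{\: \: ki} T^i_{\: \: lj}\right),
\end{align*}
the first sum is in $L^1_{loc}(U)$ by tameness of $\nabla$; each summand of the second sum is a product of an $L^1_{loc}$ function (a Christoffel symbol of $\nabla$) with an $L^\infty_{loc}$ function (a component of $T$), hence is $L^1_{loc}$; and each summand of the third sum is a product of two $L^\infty_{loc}$ functions, hence lies in $L^\infty_{loc}(U) \subset L^1_{loc}(U)$. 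Since the index set is finite, the whole expression is in $L^1_{loc}(U)$, and Proposition \ref{3.6} shows that $\nabla + T$ is tame.

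There is no serious obstacle; the one point to watch is that tameness of $\nabla$ does \emph{not} supply local integrability of the individual products $\Gamma^i_{\: \: kl}\Gamma^j_{\: \: ji}$ — only of the special combination appearing in Proposition \ref{3.6} — so the argument must arrange that every new term produced by the perturbation carries at least one factor of $T$. That is exactly why it matters that $T$ be bounded rather than merely integrable, and why pairing a merely locally integrable Christoffel symbol with a locally bounded component of $T$ is the right estimate; no bound on a product of two Christoffel symbols of $\nabla$ is needed beyond what tameness already provides. An equivalent route is to expand $Q_{\nabla + T}(V,W)$ directly as in the proof of Proposition \ref{3.6} and compare with $Q_\nabla(V,W)$: the extra terms all contain a factor of $T$ and are controlled by the same reasoning.
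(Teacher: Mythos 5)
Your proof is correct and follows essentially the same route as the paper: expand the perturbed expression into the unperturbed part, cross terms, and terms quadratic in $T$, and observe that every new term carries an $L^\infty_{loc}$ factor coming from $T$ against an $L^1_{loc}$ Christoffel symbol or another bounded factor. The only cosmetic difference is that you carry out the expansion at the level of the Christoffel-symbol criterion of Proposition \ref{3.6} (using both directions of that proposition), whereas the paper expands the quadratic form $Q_{\nabla+T}(V,W)$ directly and uses Proposition \ref{3.6} only to conclude that $\nabla V$ and $\nabla W$ are $L^1$.
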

\begin{proof}
Writing $T_i = T(\partial_i)$, we have
\begin{align} \label{3.15}
& \int_M \sum_{i,j} \left[ \left( \nabla_i V^i + T_i V^i \right) 
\left( \nabla_j W^j + T_j W^j \right) -
\left( \nabla_i V^j + T_i V^j \right) 
\left( \nabla_j W^i + T_j W^i \right) \right] = \\
& \int_M \sum_{i,j} \left[ \left( \nabla_i V^i \right) 
\left( \nabla_j W^j  \right) -
\left( \nabla_i V^j  \right) 
\left( \nabla_j W^i \right) \right] + \notag \\
& \int_M \sum_{i,j} \left[ \left( T_i V^i \right) 
\left( \nabla_j W^j \right)
+
\left( \nabla_i V^i  \right) 
\left( T_j W^j \right)
 -
\left( T_i V^j \right) 
\left( \nabla_j W^i \right)
-
\left( \nabla_i V^j \right) 
\left( T_j W^i \right)
 \right] + \notag \\
& \int_M \sum_{i,j} \left[ \left( T_i V^i \right) 
\left( T_j W^j \right) -
\left( T_i V^j \right) 
\left( T_j W^i \right) \right]. \notag
\end{align}
As before, we take $V$ and $W$ to have compact support.
Since Proposition \ref{3.6} tells us that in each coordinate neighborhood, the
Christoffel symbols $\Gamma^i_{\: \: jk}$ are locally $L^1$, it
follows that $\nabla V$ and $\nabla W$ are $L^1$ on $M$. Since
$TV$ and $TW$ are $L^\infty$, the integrands in the 
second and third integrals on the
right-hand side of (\ref{3.15}) are integrable.  The proposition follows.
\end{proof}

\subsection{Ricci measure} \label{subsect3.2}

Suppose that $\nabla$ is tame.  We can rewrite the expression
for $Q_1(V, W)$ in (\ref{3.8}) as
\begin{equation} \label{3.16}
Q_1(V,W) = \int_U \sum_{i,k,l} 
\left[ \frac12 \Gamma^i_{\: \: il} \partial_k (v^k  w^l)
+ \frac12 \Gamma^i_{\: \: ik} \partial_l (v^k  w^l)
- \Gamma^i_{\: \: kl} \partial_i (v^k  w^l)
 \right] dx^1 \ldots dx^n.
\end{equation}
Using an underlying smooth structure for $M$, and taking
$V$ and $W$ to be smooth with compact support in
the coordinate neighborhood $U$ for the moment, 
it follows that
\begin{equation} \label{3.17}
Q(V, W) = \int_U \sum_{k,l} v^k R_{(kl)} w^l dx^1 \ldots dx^n,
\end{equation}
where the distribution 
\begin{equation} \label{3.18}
R_{(kl)} = \sum_i \left( \partial_i \Gamma^i_{\: \: kl} - \frac12 \partial_k
\Gamma^i_{\: \: il} - \frac12 \partial_l \Gamma^i_{\: \: ik} \right) +
\sum_{i,j} \left( \Gamma^i_{\: \: kl}
\Gamma^j_{\: \: ji} -
\Gamma^j_{\: \: ki} \Gamma^i_{ \: \: lj} \right)
\end{equation}
is recognized as the symmetrized Ricci tensor.

Given a continuous vector bundle $E$ on $M$, let
${\mathcal M}(M; E)$ denote the dual space to the
topological vector space of compactly supported
continuous sections of $E^*$. We can think of an element of 
${\mathcal M}(M; E)$ as an $E$-valued measure on $M$.

In the rest of this subsection, we make the following assumption.

\begin{assumption} \label{ass}
There is some nonnegative $h \in {\mathcal M}(M; S^2(T^*M) \otimes
{\mathcal D}^*)$
so that for all $V$ and $W$,
\begin{equation}
Q(V,W) \ge - \int_M \langle V, h W \rangle.
\end{equation}
\end{assumption}

Assumption \ref{ass} implies that the distributional tensor field
$R_{(kl)} + h_{kl}$ is nonnegative.  It follows that it is a tensor-valued
measure, and hence so is $R_{(kl)}$.  
The conclusion is that there is some
${\mathcal R} \in {\mathcal M}(M; S^2(T^*M) \otimes {\mathcal D}^*)$ so that
for all 
compactly supported
Lipschitz vector-valued half-densities $V$ and $W$ on $M$, we
have 
\begin{equation} \label{3.19}
Q(V,W) = \int_M \langle V, {\mathcal R} W \rangle.
\end{equation}
We call ${\mathcal R}$ the Ricci measure of the connection $\nabla$.

We now prove a convergence result for the Ricci measure.

\begin{proposition} \label{3.20}
Let $\nabla$ be a tame connection with Ricci measure ${\mathcal R}$. 
Let $\left\{ T^{(r)} \right\}_{r=1}^\infty$
be a sequence of measurable $(1,2)$-tensor fields as in Proposition
\ref{3.14}.  
Suppose that the connections $\left\{ \nabla + T^{(r)} \right\}_{r=1}^\infty$
satisfy Assumption \ref{ass} with a uniform choice of $h$.
Let $\left\{ {\mathcal R}^{(r)} \right\}_{r=1}^\infty$ 
be the Ricci measures of the connections 
$\left\{ \nabla + T^{(r)} \right\}_{r=1}^\infty$.
Suppose that for each compactly supported Lipschitz vector field $v$,
we have $\lim_{r \rightarrow \infty} T^{(r)}(v) = 0$ in
$L^\infty(M; \End(TM))$. Then $\lim_{r \rightarrow \infty}
{\mathcal R}^{(r)} = {\mathcal R}$ in the weak-$*$ topology on
${\mathcal M}(M; S^2(T^*M) \otimes {\mathcal D}^*)$.
\end{proposition}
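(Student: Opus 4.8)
The plan is to deduce the weak-$*$ convergence from two facts: (i) for each fixed pair of compactly supported Lipschitz vector-valued half-densities $V,W$ one has $Q^{(r)}(V,W)\to Q(V,W)$, where $Q^{(r)}$ denotes the form (\ref{3.3}) of the connection $\nabla+T^{(r)}$ (tame by Proposition \ref{3.14}); and (ii) for each compact $K\subset M$ the total variations $\|\mathcal R^{(r)}\|(K)$ are bounded uniformly in $r$. Granting these, recall that weak-$*$ convergence in $\mathcal M(M;S^2(T^*M)\otimes\mathcal D^*)$ means convergence of the pairing with every compactly supported continuous section $s$ of $S^2(TM)\otimes\mathcal D$, and that $\mathcal R^{(r)}(V\odot W)=Q^{(r)}(V,W)$ by (\ref{3.19}), where $V\odot W$ is the symmetrized tensor product. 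Every smooth compactly supported section of $S^2(TM)\otimes\mathcal D$ is a finite $\R$-linear combination of such products: working in a coordinate chart, trivializing $\mathcal D$ by a fixed smooth positive density $\mu_0$, and writing a component $h\,\partial_i\odot\partial_j\otimes\mu_0$ as $(h\,\partial_i\otimes\sqrt{\mu_0})\odot(\chi\,\partial_j\otimes\sqrt{\mu_0})$ for a cutoff $\chi\equiv 1$ on $\supp h$, one patches with a partition of unity. Hence by (i), $\mathcal R^{(r)}(s)\to\mathcal R(s)$ for every smooth compactly supported $s$. For a general continuous $s$ with $\supp s\subset K$, choose smooth $s_\eps$ with $\supp s_\eps\subset K$ and $\|s-s_\eps\|_\infty<\eps$; then
\begin{equation*}
|\mathcal R^{(r)}(s)-\mathcal R(s)|\le\big(\sup_r\|\mathcal R^{(r)}\|(K)+\|\mathcal R\|(K)\big)\,\eps+|\mathcal R^{(r)}(s_\eps)-\mathcal R(s_\eps)|,
\end{equation*}
and letting $r\to\infty$ and then $\eps\to 0$ gives $\mathcal R^{(r)}(s)\to\mathcal R(s)$.

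For (i), fix $V,W$ with compact support and use the expansion (\ref{3.15}) with $T=T^{(r)}$, which writes $Q^{(r)}(V,W)$ as $Q(V,W)$ plus the ``cross'' integral (third line of (\ref{3.15})), linear in $T^{(r)}$ and in $(\nabla V,\nabla W)$, plus the ``quadratic'' integral (fourth line), quadratic in $T^{(r)}$. Since $\nabla$ is tame, Proposition \ref{3.6} gives that the Christoffel symbols are locally $L^1$, so (working chart by chart with a partition of unity, as in the proof of Proposition \ref{3.6}) $\nabla V$ and $\nabla W$ lie in $L^1(M)$. By hypothesis $T^{(r)}(v)\to 0$ in $L^\infty(M;\End(TM))$ for every compactly supported Lipschitz vector field $v$; applied to the coordinate vector fields of $V$ and $W$ multiplied by the partition of unity, this shows the $L^\infty$ half-densities built from $T^{(r)}$ and $V$ or $W$ in (\ref{3.15}) tend to $0$ in $L^\infty$ on the relevant compact sets. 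The cross integral is then bounded by $C(\|T^{(r)}V\|_\infty+\|T^{(r)}W\|_\infty)(\|\nabla V\|_{L^1}+\|\nabla W\|_{L^1})\to 0$ and the quadratic integral by $C\|T^{(r)}V\|_\infty\|T^{(r)}W\|_\infty\to 0$. This proves (i), and in particular $Q^{(r)}(V,V)$ is bounded in $r$ for each fixed $V$.

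For (ii), note that Assumption \ref{ass} applied to $\nabla+T^{(r)}$ with the common $h$ says precisely that each tensor-valued measure $\mathcal R^{(r)}+h$ is nonnegative. Fix a compact $K$ and a relatively compact open $U\supset K$, and build once and for all a nonnegative $\Phi\in C_c(M;S^2(TM)\otimes\mathcal D)$, positive definite on $U$, of the form $\Phi=\sum_{k=1}^N V_k\odot V_k$ with the $V_k$ smooth and compactly supported — for instance $\Phi=\sum_\beta\psi_\beta^2\sum_i(\partial_i\otimes\nu_\beta)\odot(\partial_i\otimes\nu_\beta)$, where $\{W_\beta\}$ is a finite family of coordinate charts covering $\bar U$, $\nu_\beta$ is the coordinate half-density on $W_\beta$, and the $\psi_\beta$ are smooth with $\supp\psi_\beta\Subset W_\beta$ and $\sum_\beta\psi_\beta^2>0$ on $\bar U$. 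Because $\mathcal R^{(r)}+h\ge 0$ and $\Phi$ is uniformly positive definite on the neighborhood $U$ of $K$, testing against $\Phi$ dominates the total variation: $\|\mathcal R^{(r)}+h\|(K)\le C_K\,(\mathcal R^{(r)}+h)(\Phi)=C_K\sum_{k=1}^N\big(Q^{(r)}(V_k,V_k)+\int_M\langle V_k,hV_k\rangle\big)$, whose right-hand side is bounded in $r$ by (i). Therefore $\|\mathcal R^{(r)}\|(K)\le\|\mathcal R^{(r)}+h\|(K)+\|h\|(K)$ is bounded uniformly in $r$, which is (ii).

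I expect (ii) to be the crux: passing from convergence (hence boundedness) of $Q^{(r)}$ evaluated on a fixed finite list of test half-densities to a genuine uniform bound on the local masses of $\mathcal R^{(r)}$. This is exactly where the uniformity of $h$ in Assumption \ref{ass} is used — it is what makes $\mathcal R^{(r)}+h$ nonnegative and, via a positive definite test section built as a sum of squares, of uniformly bounded mass — and it requires the elementary but slightly delicate linear algebra of positive semidefinite tensor-valued measures. Step (i) and the density of symmetrized products are comparatively routine.
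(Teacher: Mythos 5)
Your proof is correct and follows essentially the same route as the paper's: pointwise convergence $Q^{(r)}(V,W)\to Q(V,W)$ for fixed test half-densities via the expansion (\ref{3.15}), then the nonnegativity of $\mathcal{R}^{(r)}+h$ coming from the uniform $h$ in Assumption \ref{ass} to upgrade distributional convergence to weak-$*$ convergence. The paper compresses this last step into a single sentence; your step (ii), deriving uniform local mass bounds by testing against a positive definite sum of squares, is exactly the standard argument being invoked there.
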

\begin{proof}
Let $Q^{(r)}$ be the quadratic form associated to the tame connection
$\nabla + T^{(r)}$. From (\ref{3.15}), for any $V$ and $W$, we have
$\lim_{r \rightarrow \infty} Q^{(r)}(V,W) = Q(V,W)$. It follows that
$\lim_{r \rightarrow \infty} {\mathcal R}^{(r)} = {\mathcal R}$ 
distributionally.  Since the relevant distributions are all measures,
with ${\mathcal R}^{(r)} + h$ nonnegative,
we have weak-$*$ convergence.
\end{proof}

\begin{remark} \label{lq}
If we further assume that $\nabla$ has Christoffel symbols in $L^q_{loc}$, for
$q > 1$, then we reach the same conclusion under the weaker
assumption that
$\lim_{r \rightarrow \infty} T^{(r)}(v) = 0$ in
$L^{\max(2,q^*)}(M; \End(TM))$, where $\frac{1}{q} + \frac{1}{q^*} = 1$.
\end{remark}

\subsection{Bakry-Emery-Ricci measure} \label{BE}

Let $\nabla$ be a measurable torsion-free connection on $TM$.
We say that $f \in W^{1,1}_{loc}(M)$ if 
in any coordinate neighborhood
$U$, we have $f \in L^1_{loc}(U)$ and there are $S_i \in 
L^1_{loc}(U)$ so that for any Lipschitz functions $F^i$ with
compact support in $U$, we have
\begin{equation}
\int_U f \sum_i \partial_i F^i \: dx^1 \ldots dx^n = - 
\int_U \sum_i S_i F^i \: dx^1 \ldots dx^n.
\end{equation}
We let $\nabla_i f$ denote $S_i$. Given compactly supported Lipschitz
vector-valued half-densities $V$ and $W$ on $M$, consider the formal
expression
\begin{align} \label{added}
Q_f(V,W) = & \int_M \sum_{i,j} \left[ e^f  \left( \nabla_i 
\left( e^{- \: \frac{f}{2}} V^i \right) \right) 
\left( \nabla_j \left( e^{- \: \frac{f}{2}} W^j \right) \right) - \right. \\
& \left.  
\: \: \: \: \: \: \: \:
\: \: \: \: \: \: \: \:
e^{-f} \left( \nabla_i \left( e^{\frac{f}{2}} V^j  \right) \right) 
\left( \nabla_j \left( e^{\frac{f}{2}} W^i \right) \right) \right] \notag \\
= & \: \int_M \sum_{i,j} \left[
\left( \nabla_i V^i \right) 
\left( \nabla_j W^j \right) -
\left( \nabla_i V^j \right) 
\left( \nabla_j W^i \right) - \right. \notag \\
& \left.
\: \: \: \: \: \: \: \:
\: \: \: \: \: \: \: \:
\frac12 (\nabla_i f) 
\left( \nabla_j \left( V^i W^j
+ V^j W^i \right) \right) \right].
\notag
\end{align}
We say that the pair $(\nabla, f)$ is tame if the integrand in
(\ref{added}) is integrable for all $V$ and $W$. 

\begin{example}
Suppose that $f$ is semiconvex.
Then $\Hess(f) = \nabla \nabla f$ is well-defined as a measurable
symmetric $2$-tensor.
Suppose that $\nabla$ is tame in the sense of Definition \ref{3.4}
and $Q$ satisfies Assumption \ref{ass}.  
Then $(\nabla, f)$ is tame and
\begin{equation}
Q_f(V,W) = \int_M \langle V, {\mathcal R}_f W \rangle,
\end{equation} 
where the $S^2(T^*M) \otimes {\mathcal D}^*$-valued measure ${\mathcal R}_f$
is given in a coordinate neighborhood $U$ by
\begin{equation}
Q_f(V, W) = \int_U \sum_{k,l} v^k \left( R_{(kl)} + \Hess(f)_{kl} \right) w^l
\: dx^1 \ldots dx^n
\end{equation}
for $V$ and $W$ having compact support in $U$.
In the Riemannian setting, 
we recognize $R_{(kl)} + \Hess(f)_{kl}$ as the Bakry-Emery-Ricci tensor.
\end{example}

\subsection{Riemannian metrics} \label{subsect3.3}

Let $g$ be a Riemannian metric on $M$,
i.e. a measurable section of $S^2(T^*M)$ that is positive definite
almost everywhere.

\begin{definition} \label{w11}
A Riemannian metric $g$ lies in $W^{1,1}_{loc}$ if in
any coordinate neighborhood $U$, we have
$g_{ij} \in L^1_{loc}(U)$ and there are 
$S^l_{\: \: jk} \in L^1_{loc}(U)$ 
so that for any Lipschitz functions $\{f^{ijk}\}$ 
with compact support in $U$, we have
\begin{equation}
\int_U \sum_{i,j,k} g_{ij} \: \partial_k f^{ijk} \: dx^1 \ldots dx^n =
\int_U \sum_{i,j,k,l} g_{il} S^l_{\: \: jk} f^{ijk} \: dx^1 \ldots dx^n.
\end{equation}
\end{definition}

If $g \in W^{1,1}_{loc}$ then it has a Levi-Civita connection with
Christoffel symbols in $L^1_{loc}$.

We recall the classical result that a smooth compact Riemannian manifold
with negative Ricci curvature has no nonzero Killing vector fields.
(In fact, this is the only result that we know for manifolds with negative
Ricci curvature in dimension greater than two.)  We show that
there is an analogous result in our setting.

\begin{definition}
A Lipschitz vector-valued half-density $V$ is {\em Killing} if
\begin{equation}
\sum_{k} \left( g_{jk} \nabla_i V^k + g_{ik} \nabla_j V^k \right) = 0.
\end{equation}
\end{definition}

We note that if $V$ is a Killing vector-valued half-density then
writing $V = v \otimes \sqrt{\dvol_g}$, the vector field $v$
is a Killing vector field in the
usual sense, at least where $g$ is $C^1$.

\begin{proposition}
If $M$ is compact, the Levi-Civita connection is tame and 
$Q(V, V) < 0$ for all nonzero $V$, then there is no nonzero
Killing $V$.
\end{proposition}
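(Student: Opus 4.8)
The plan is to show that a nonzero Killing vector-valued half-density $V$ would force $Q(V,V) = 0$, contradicting the hypothesis $Q(V,V) < 0$. The classical argument in the smooth case integrates the Bochner formula: for a Killing field one has $d^*V = 0$ (using the half-density normalization, this is exactly the trace of the Killing equation, $\sum_{i} \nabla_i V^i = 0$, which follows by contracting $\sum_k (g_{jk}\nabla_i V^k + g_{ik}\nabla_j V^k)=0$ with $g^{ij}$), and moreover the antisymmetrization $\nabla_i V_j - \nabla_j V_i$ is, up to sign, $dV$, while the symmetrization vanishes. So the Bochner-type integrand $\sum_{i,j}\left[(\nabla_i V^i)(\nabla_j V^j) - (\nabla_i V^j)(\nabla_j V^i)\right]$ becomes, after an integration by parts, a manifestly sign-definite or vanishing expression. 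I want to reproduce this purely in terms of $Q$ and the Killing condition, without invoking the curvature tensor directly, so that it works in the tame (nonsmooth) setting.

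Concretely, the first step is to rewrite $Q(V,V)$ using the identity already recorded in Section 3: locally, $Q_2(V,W)$ contributes the $\Gamma\Gamma$ part of the symmetrized Ricci tensor, and $Q_1(V,W)$ can be written in the divergence form (3.16). The cleaner route, though, is to work with the quadratic form directly. Using the Riemannian half-density to trivialize ${\mathcal D}^{\frac12}$ as in Remark 3.5, write $V = v \otimes \sqrt{\dvol_g}$ and let $\omega = v^\flat$ be the corresponding $1$-form. The Killing condition says precisely that the symmetric part of $\nabla\omega$ vanishes, so $\nabla_i\omega_j = \frac12(d\omega)_{ij}$ (antisymmetric) and in particular $d^*\omega = -\sum_i \nabla^i\omega_i = 0$. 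Then the integrand of $Q$, which in the $1$-form picture is (by Remark 3.5 and the Bochner identity (1.1)) $\langle d\omega, d\omega\rangle + \langle d^*\omega, d^*\omega\rangle - \langle\nabla\omega,\nabla\omega\rangle = \langle d\omega, d\omega\rangle - \frac14\langle d\omega, d\omega\rangle\cdot(\text{appropriate constant})$. The precise computation: since $\nabla\omega = \frac12 d\omega$ pointwise (as tensors, with $d\omega$ the antisymmetrized covariant derivative times the usual combinatorial factor), $|\nabla\omega|^2 = \frac12 |d\omega|^2$ in the convention where $|d\omega|^2 = \frac12\sum_{ij}(d\omega)_{ij}^2$ — one must fix conventions carefully here — so the integrand reduces to $|d\omega|^2 - \frac12|d\omega|^2 = \frac12|d\omega|^2 \ge 0$. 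Hence $Q(V,V) \ge 0$.

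Wait — I should double-check the sign, because the conclusion needs $Q(V,V) = 0$, not merely $\ge 0$. In fact the Bochner formula for Killing fields gives $\int_M \Ric(\omega,\omega) = \int_M(|\nabla\omega|^2 - \frac12|d\omega|^2)$; but this is not automatically zero. What is true is the refined Bochner identity: for a Killing field, $\Delta \frac12|\omega|^2 = |\nabla\omega|^2 - \Ric(\omega,\omega)$, and integrating the left side over compact $M$ gives zero, so $\int_M \Ric(\omega,\omega) = \int_M|\nabla\omega|^2 = \frac12\int_M|d\omega|^2 \ge 0$. Translating back: $Q(V,V) = \int_M\langle V, {\mathcal R}V\rangle$ equals $\int_M \Ric(\omega,\omega)\,\dvol$ in the smooth case, which is $\ge 0$; this already contradicts $Q(V,V) < 0$. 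So the correct statement to prove is simply $Q(V,V) \ge 0$ for Killing $V$, and the cleanest argument is: (i) $d^* \omega = 0$, hence by the integrated Bochner formula (1.1) with $\omega_1 = \omega_2 = \omega$, $Q(V,V) = \int_M(|d\omega|^2 + 0 - |\nabla\omega|^2)$; (ii) $\nabla_i\omega_j$ is antisymmetric, so $|\nabla\omega|^2 = \frac12|d\omega|^2$ (in the convention $|d\omega|^2 = \sum_{i<j}(d\omega)_{ij}^2$ matching $|\nabla\omega|^2 = \sum_{ij}(\nabla_i\omega_j)^2$... this normalization must be pinned down); (iii) therefore $Q(V,V) = \int_M(\tfrac12|d\omega|^2) \ge 0$, contradicting $Q(V,V) < 0$ unless $V = 0$.

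The main obstacle is \emph{regularity}: Remark 3.5 and the identification with formula (1.1) are justified for smooth $\omega$, but here $g$ is only $W^{1,1}_{loc}$ and $V$ is only Lipschitz, so the metric need not be $C^1$ and the Bochner formula does not literally hold pointwise. I expect one must instead argue directly from the tame structure: expand $Q(V,V) = Q_1(V,V) + Q_2(V,V)$ using (3.8)–(3.9), use the Killing equation $g_{jk}\nabla_i v^k + g_{ik}\nabla_j v^k = 0$ (which holds a.e.) to symmetrize, and show that $Q(V,V)$ equals $\int_M$ of a nonnegative density obtained by completing the square in the first-order terms — essentially reproducing the $\frac12|d\omega|^2$ integrand by an integration by parts that is valid because the Christoffel symbols are $L^1_{loc}$ and $v$ is Lipschitz (so the integration-by-parts justification from Section 2 applies to $V \otimes$(Lipschitz half-density) combinations). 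The delicate point is that the integration by parts moving a derivative off $v$ and onto a product $v^k w^l$ as in (3.16) must be performed so that no term involving a derivative of $g$ or $\Gamma$ is left uncontrolled; since every such term is $L^1_{loc}$ paired against a Lipschitz (hence $L^\infty_{loc}$) factor, this works, but the bookkeeping to reach a perfect square is the crux. Once $Q(V,V) = \frac12\int_M |d\omega|^2\,\dvol_g \ge 0$ is established rigorously, the proposition is immediate.
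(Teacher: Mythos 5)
Your overall strategy is the right one --- show that a Killing $V$ forces $Q(V,V)\ge 0$, contradicting the hypothesis --- and the two algebraic facts you isolate ($\sum_i\nabla_i V^i=0$ from tracing the Killing equation, and antisymmetry of $\nabla V$ after lowering an index) are exactly the ones the paper uses. The genuine gap is that you never actually establish the inequality: you route the argument through Remark 3.5, the integrated Bochner identity (1.1), and an integration by parts to ``reach a perfect square,'' you correctly observe that this step is delicate when $g$ is only $W^{1,1}_{loc}$ and $V$ only Lipschitz, and you then leave it as ``the crux'' without carrying it out. As written, the proposal proves nothing beyond the smooth case.

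The fix is that the detour through (1.1) is unnecessary, because $Q$ is \emph{defined} by the pointwise expression (3.3), not by the Bochner form. The integrand of $Q(V,V)$ is
$\left(\sum_i\nabla_i V^i\right)^2-\sum_{i,j}(\nabla_i V^j)(\nabla_j V^i)$,
a density defined almost everywhere and integrable by tameness. Contracting the Killing equation with $g^{ij}$ kills the first term, and the Killing equation also gives $\nabla_j V^i=-\sum_{k,l}g_{jl}g^{ik}\nabla_k V^l$ a.e., so the second term becomes
$+\sum_{i,j,k,l}g_{jl}g^{ik}(\nabla_i V^j)(\nabla_k V^l)$,
which is the $g$-norm squared of $\nabla V$ and hence pointwise nonnegative. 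Thus $Q(V,V)\ge 0$ with no integration by parts, no identification with $1$-forms, and no regularity on $g$ beyond what tameness already supplies; this is the paper's two-line proof. (Also, your mid-proof worry that you need $Q(V,V)=0$ rather than $\ge 0$ is a red herring, as you eventually note: $\ge 0$ already contradicts $Q(V,V)<0$ for nonzero $V$.)
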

\begin{proof}
If $V$ is Killing then $\sum_i \nabla_i V^i = 0$ and
$\nabla_j V^i = - \sum_{k,l} g_{jl} g^{ik} \nabla_k V^l$, so
\begin{equation}
Q(V,V) = \int_M \sum_{i,j,k,l} g_{jl} g^{ik} (\nabla_i V^j) (\nabla_k V^l)
\ge 0.
\end{equation}
The proposition follows.
\end{proof}

\section{Examples} \label{sect4}

In this section we compute examples of the Ricci measure
coming from Riemannian metrics in $W^{1,1}_{loc}$, in the sense of
Definition \ref{w11}.  The examples are Alexandrov
surfaces, Riemannian manifolds with boundary that are glued together,
families of cones, K\"ahler manifolds and limit spaces of manifolds
with lower bounds on Ricci curvature and injectivity radius.
At the end of the section we make some remarks.

\subsection{Alexandrov surfaces} \label{subsect4.1}

We recall that there is a notion of a metric on a surface having
bounded integral curvature \cite{Reshetnyak (1993)}. This includes
surfaces with Alexandrov curvature bounded below. 
(For us, the relevance of the latter is that they are the noncollapsed
Gromov-Hausdorff limits of smooth Riemannian two-manifolds
with Ricci curvature bounded below.)
Such a metric
comes from a (possibly) singular Riemannian metric $g$. There exist local
isothermal coordinates in which $g = e^{2 \phi} \left(
(dx^1)^2 + (dx^2)^2 \right)$,
where $\phi$ is the difference of two subharmonic functions 
(with respect to the Euclidean metric). The volume density $\dvol_g$,
given locally by 
$e^{2 \phi} dx^1 dx^2$, lies in $L^1_{loc}$. 

\begin{proposition} \label{4.1}
The Levi-Civita connection $\nabla$ is tame.
\end{proposition}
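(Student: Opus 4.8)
The plan is to verify the two integrability conditions of Proposition \ref{3.6} for the Christoffel symbols of $g = e^{2\phi}\left((dx^1)^2+(dx^2)^2\right)$ in isothermal coordinates, using the structure $\phi = \phi_+ - \phi_-$ with $\phi_\pm$ subharmonic. First I would write down the Christoffel symbols explicitly: for a conformally flat metric in two dimensions they are the first-order differential operators in $\phi$, namely $\Gamma^1_{\:\:11} = \partial_1\phi$, $\Gamma^1_{\:\:12} = \partial_2\phi$, $\Gamma^1_{\:\:22} = -\partial_1\phi$, and symmetrically $\Gamma^2_{\:\:22} = \partial_2\phi$, $\Gamma^2_{\:\:12} = \partial_1\phi$, $\Gamma^2_{\:\:11} = -\partial_2\phi$. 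Since a subharmonic function lies in $W^{1,1}_{loc}$ (indeed in $W^{1,p}_{loc}$ for all $p<2$), $\phi \in W^{1,1}_{loc}$, so each $\partial_i\phi \in L^1_{loc}$, which gives the first condition: each $\Gamma^i_{\:\:jk}$ is locally integrable.

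Next I would address the quadratic expression $\sum_{i,j}\left(\Gamma^i_{\:\:kl}\Gamma^j_{\:\:ji} - \Gamma^j_{\:\:ki}\Gamma^i_{\:\:lj}\right)$. A direct computation in these coordinates should show that this combination, for each fixed $(k,l)$, is, up to sign and constant, exactly the expression whose trace is the Gauss curvature times the area density; more precisely, in two dimensions the $\Gamma\Gamma$ part of the symmetrized Ricci tensor $R_{(kl)}$ (from \eqref{3.18}) combines with the derivative part, and for a conformally flat metric one has $R_{(kl)} = -(\Delta\phi)\,\delta_{kl}$ where $\Delta$ is the flat Laplacian, and the curvature measure is $-\Delta\phi\, dx^1 dx^2 = K\,\dvol_g$. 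The key point for tameness, though, is just that the $\Gamma\Gamma$ combination alone is $L^1_{loc}$: each product $\Gamma\Gamma$ is a product of two first derivatives of $\phi$. Here I cannot simply invoke $\partial\phi \in L^1_{loc}$, since a product of two $L^1$ functions need not be $L^1$; instead I would use that $\phi_\pm$ subharmonic implies $\partial_i\phi_\pm \in L^2_{loc}$ — this is the standard Calderón–Zygmund / Sobolev regularity for subharmonic functions, or can be seen from the fact that $\Delta\phi_\pm$ is a nonnegative Radon measure and local elliptic estimates give $\nabla\phi_\pm \in L^2_{loc}$. Wait — actually $\nabla\phi_\pm$ is only guaranteed in $L^p_{loc}$ for $p < 2$ in general, not $p = 2$; so I should be more careful and instead exploit a cancellation: the specific combination $\sum_{i,j}(\Gamma^i_{\:\:kl}\Gamma^j_{\:\:ji} - \Gamma^j_{\:\:ki}\Gamma^i_{\:\:lj})$ should collapse, after substituting the expressions above, to something that is better than a generic product of gradients. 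In two dimensions with $\phi = \phi_+ - \phi_-$, the bilinear terms in the curvature that survive are essentially $|\nabla\phi|^2$-type terms, and I expect that in the symmetrized Ricci combination these $|\nabla\phi|^2$ terms actually cancel entirely, leaving only the second-derivative (distributional Laplacian) part — which is a measure. This cancellation is the crux.

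Concretely, I would carry out the following steps: (1) record the Christoffel symbols above and note $\partial_i\phi \in L^1_{loc}$, establishing the first integrability condition; (2) substitute into $\sum_{i,j}\left(\Gamma^i_{\:\:kl}\Gamma^j_{\:\:ji} - \Gamma^j_{\:\:ki}\Gamma^i_{\:\:lj}\right)$ and simplify, verifying that all quadratic-in-$\nabla\phi$ contributions cancel for each $(k,l)$, so that this expression is in fact identically zero (or a constant multiple of something manifestly $L^1_{loc}$); (3) conclude via Proposition \ref{3.6} that $\nabla$ is tame. As a sanity check I would confirm that the full symmetrized Ricci tensor then equals $-(\Delta\phi)\delta_{kl}$ as a distribution, hence $R_{(kl)} = -(\Delta\phi_+ - \Delta\phi_-)\delta_{kl}$, a signed measure, consistent with the bounded-integral-curvature picture of Reshetnyak. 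The main obstacle I anticipate is precisely verifying the vanishing of the quadratic terms in step (2): if they do not vanish but instead reduce to $|\nabla\phi|^2$, then one must supply the $L^2_{loc}$-regularity of $\nabla\phi_\pm$ for subharmonic $\phi_\pm$ — which does hold locally (subharmonic functions on the plane are in $W^{1,2}_{loc}$ away from a set where the Riesz measure could concentrate, and in fact $W^{1,p}_{loc}$ for $p<2$ always, with $W^{1,2}_{loc}$ in the bounded-integral-curvature case because the curvature measure has no atoms of mass $\geq 2\pi$, equivalently the cone angles are positive) — so either route closes the argument, but the cancellation route is cleaner and is the one I would pursue first.
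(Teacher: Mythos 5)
Your proposal follows the paper's proof exactly: write the Christoffel symbols $\Gamma^i_{\:\:jk} = \delta_{ij}\partial_k\phi + \delta_{ik}\partial_j\phi - \delta_{jk}\partial_i\phi$, use that first derivatives of subharmonic functions are $L^1_{loc}$, and observe that $\sum_{i,j}(\Gamma^i_{\:\:kl}\Gamma^j_{\:\:ji} - \Gamma^j_{\:\:ki}\Gamma^i_{\:\:lj})$ vanishes identically (it does: both terms equal $4\,\partial_k\phi\,\partial_l\phi - 2\delta_{kl}|\nabla\phi|^2$ in two dimensions), so Proposition \ref{3.6} applies. One caution: your fallback claim that ``either route closes the argument'' is false --- the $L^2_{loc}$ route fails, since for the cone of Example \ref{4.5} the Christoffel symbols lie in $L^1_{loc}$ but not $L^2_{loc}$ --- so the cancellation is not merely cleaner but essential.
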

\begin{proof}
In the isothermal coordinates, we have 
\begin{equation} \label{4.2}
\Gamma^i_{\: \: jk} = \delta_{ij} \partial_k \phi + \delta_{i,k} 
\partial_j \phi
- \delta_{jk} \partial_i \phi.
\end{equation}
A subharmonic function $f$ on a two-dimensional domain has
$\partial_i f \in L^1_{loc}$ \cite[Pf. of Lemma 1.6]{Landkof (1972)}.
(The proof there is for functions defined on $\C$ but can be localized.)  
One finds that
$\sum_{i,j} (\Gamma^i_{\: \: kl} \Gamma^j_{\: \: ji} -
\Gamma^j_{\: \: ki} \Gamma^i_{\: \: lj}) = 0$.
Proposition \ref{3.6} implies that $\nabla$ is tame.
\end{proof}

The quadratic form $Q$ can be described in a coordinate-free way
as follows.  Let
$K$ denote the curvature measure of $g$, given in local isothermal
coordinates by
$dK = - (\partial_1^2 + \partial_2^2) \phi \: dx^1 dx^2$.
(That is, for any smooth function $f$ with support in the coordinate
chart, $\int_M f \: dK = - \int_{\R^2} 
(\partial_1^2 + \partial_2^2)f \:  \phi \: dx^1 dx^2$.)
Given compactly supported Lipschitz vector-valued half-densities $V$ and $W$, 
consider $\frac{g(V,W)}{\dvol_g}$. We claim that this extends over
the singularities of $g$ to a
Lipschitz function on $M$. To see this, 
in isothermal coordinates we can write
$V = \sum_i v^i \partial_i \otimes \sqrt{dx^1 dx^2}$ and
$W = \sum_i w^i \partial_i \otimes \sqrt{dx^1 dx^2}$, with
$\{v^i\}_{i=1}^2$ and $\{w^i\}_{i=1}^2$ Lipschitz. Then
$g(V, W) = e^{2 \phi} \left( v^1 w^1 + v^2 w^2 \right) dx^1 dx^2$ and
$\dvol_g = e^{2 \phi} dx^1 dx^2$, from which the claim follows.
One finds
\begin{proposition} \label{4.3}
\begin{equation} \label{4.4}
Q(V, W) = \int_M \frac{g(V,W)}{\dvol_g} \: dK.
\end{equation}
\end{proposition}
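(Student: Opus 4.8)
The plan is to compute both sides of \eqref{4.4} in local isothermal coordinates and check they agree, since the statement is local and both sides are additive over a partition of unity. First I would recall from the proof of Proposition \ref{4.1} that in isothermal coordinates $g = e^{2\phi}\left((dx^1)^2 + (dx^2)^2\right)$ the Christoffel symbols are $\Gamma^i_{\:\: jk} = \delta_{ij}\partial_k\phi + \delta_{ik}\partial_j\phi - \delta_{jk}\partial_i\phi$, and that the quadratic-in-$\Gamma$ term $Q_2$ vanishes identically, so $Q = Q_1$. Then I would plug these $\Gamma$'s into the convenient form \eqref{3.16} of $Q_1$. Since $\Gamma^i_{\:\: il} = 2\partial_l\phi$ (summing over $i$ in two dimensions: $\delta_{ii}\partial_l\phi + \delta_{il}\partial_i\phi - \delta_{il}\partial_i\phi$ gives $2\partial_l\phi$) and $\sum_i \Gamma^i_{\:\: kl}\partial_i(v^k w^l) = \sum_{i,k,l}(\delta_{ik}\partial_l\phi + \delta_{il}\partial_k\phi - \delta_{kl}\partial_i\phi)\partial_i(v^k w^l)$, a short computation collapses the integrand. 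The diagonal terms conspire so that, writing $u^{kl} = v^k w^l$, one is left with something proportional to $\sum_{k}\partial_k\phi \, \partial_k(u^{11}+u^{22}) - (\text{cross terms})$; carefully collecting, the result should be
\begin{equation} \label{4.5}
Q_1(V,W) = - \int_U \left( \partial_1 u + \partial_2 u \right) \cdot \text{(derivative of }\phi\text{ pairing)} \: dx^1 dx^2,
\end{equation}
which after one integration by parts (legitimate because $\partial_i\phi \in L^1_{loc}$ and $u := v^1w^1 + v^2 w^2$ is Lipschitz with compact support) becomes $\int_U u \cdot \left(-(\partial_1^2 + \partial_2^2)\phi\right)$, interpreted distributionally.

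Next I would identify the two pieces of this last integral. By definition, $dK = -(\partial_1^2 + \partial_2^2)\phi \, dx^1 dx^2$ as a measure (the distributional Laplacian of a difference of subharmonic functions is a signed measure). And from the computation in the paragraph preceding the proposition, $\frac{g(V,W)}{\dvol_g} = v^1 w^1 + v^2 w^2 = u$ in these coordinates, a Lipschitz function on $M$. So the coordinate computation yields exactly $Q(V,W) = \int_U u \, dK = \int_U \frac{g(V,W)}{\dvol_g}\, dK$, which is the claimed formula restricted to $U$. To globalize, I would take a Lipschitz partition of unity $\{\phi_r\}$ subordinate to a cover by isothermal coordinate charts, use \eqref{3.11} to write $Q(V,W) = \sum_r Q(\phi_r V, W)$, apply the local formula to each term (noting $\frac{g(\phi_r V, W)}{\dvol_g} = \phi_r \frac{g(V,W)}{\dvol_g}$ since the factor $\phi_r$ pulls out of the bilinear pairing), and sum, using that $\frac{g(V,W)}{\dvol_g}$ is a globally well-defined Lipschitz function so the local contributions patch to $\int_M \frac{g(V,W)}{\dvol_g}\, dK$.

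The main obstacle I anticipate is the bookkeeping in the first step: verifying that after substituting the isothermal Christoffel symbols, the two ``$\frac12 \Gamma^i_{\:\: il}$'' terms and the ``$-\Gamma^i_{\:\: kl}$'' term in \eqref{3.16} combine to give precisely the Euclidean Laplacian of $\phi$ paired against $v^1w^1 + v^2w^2$, with all off-diagonal contributions (those involving $v^1 w^2$ or $v^2 w^1$) cancelling. This is a finite symbol-pushing computation specific to $n=2$, and the vanishing of the cross terms is what makes the clean answer \eqref{4.4} possible; it should be checked directly rather than quoted. A secondary technical point is justifying the integration by parts at the level of distributions: one needs that $\phi$ is in $W^{1,1}_{loc}$ (which follows since $\partial_i\phi \in L^1_{loc}$ for a difference of subharmonic functions) so that $\int u \, \partial_i\partial_j\phi$ is unambiguously $-\int (\partial_j u)\,\partial_i\phi$ and $\int u\,dK$ agrees with the smooth-test-function definition given in the text, which extends to Lipschitz $u$ by density.
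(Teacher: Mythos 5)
Your proposal is correct and follows the route the paper intends (the paper itself only says ``One finds,'' leaving exactly this computation implicit): substituting the isothermal Christoffel symbols into (\ref{3.16}), noting $Q_2=0$, the integrand collapses to $\sum_i \partial_i\phi\,\partial_i(v^1w^1+v^2w^2)$ with the off-diagonal terms cancelling, and one integration by parts against the measure $dK$ gives (\ref{4.4}). The only blemish is the vague intermediate display in your write-up, but the endpoints of the computation and the density argument for Lipschitz test data are right.
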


\begin{example}
If $g$ is smooth and $\kappa$ is the Gaussian curvature then
$dK = \kappa \dvol_g$, so 
\begin{equation}
Q(V, W) = \int_M \kappa \: g(V, W).
\end{equation}
\end{example}

\begin{example} \label{4.5}
For $\alpha < 1$, put $g = 
\left( (x^1)^2 + (x^2)^2 \right)^{- \alpha}
\left( (dx^1)^2 + (dx^2)^2 \right)$. Then
$(\R^2, g)$ is a cone with total cone angle $2 \pi (1 - \alpha)$.
One finds that
\begin{equation} \label{4.6}
Q(V, W) = 2 \pi \alpha \left( v^1(0,0) w^1(0,0) + v^2(0,0) 
w^2(0,0) \right).
\end{equation}
In this case, 
$\partial_i \phi = - \alpha \frac{x^i}{(x^1)^2 + (x^2)^2}$,
so $\Gamma^i_{\: \: jk}$ lies in $L^1_{loc}$ but not in $L^2_{loc}$.
\end{example}

\begin{example} \label{4.7}
For $c > 0$, suppose that
$g = e^{- 2c |x^1|} \left( (dx^1)^2 + (dx^2)^2 \right)$. Then
\begin{equation} \label{4.8}
Q(V, W) = 2c \int_{- \infty}^\infty \left( v^1(0, x^2) w^1(0, x^2)  + 
v^2(0, x^2) w^2(0, x^2) \right) dx^2.
\end{equation}

In this example, $\Gamma^i_{\: \: jk}$ lies in $L^2_{loc}$. 
The geometry can be described as follows.  Take a
two-dimensional cone with total cone angle $2 \pi c$. Truncate the cone
at distance $\frac{1}{c}$ from the vertex.  Take two copies of such
truncated cones and glue them along their circle boundaries.  Remove
the two vertex points and take the universal cover. 

Note that
all of the tangent cones are isometric to $\R^2$, but the Ricci measure
is not absolutely continuous.
\end{example}

\subsection{Gluing manifolds along their boundaries} \label{subsect4.2}

Let $M_1$ and $M_2$ be Riemannian manifolds with boundaries. 
Let $A^{(1)}_{ij}$ (resp. $A^{(2)}_{ij}$) denote the second fundamental form
of $\partial M_1$ (resp. $\partial M_2$), taking values in normal vectors.
Let $H^{(1)}$ (resp. $H^{(2)}$)
denote the mean curvature of
$\partial M_1$ (resp. $\partial M_2$), also taking value in normal vectors.  
Our conventions are such that for the
unit ball in $\R^n$, if $v$ is a nonzero tangent vector to the unit sphere
then $A(v,v)$ is inward pointing.

Let
$\phi : \partial M_1 \rightarrow \partial M_2$ be an isometric
diffeomorphism.
Using the
local product structure near $\partial M_1$ (resp. $\partial M_2$)
coming from the normal exponential map, the result
$M = M_1 \cup_{\phi} M_2$ of gluing $M_1$ to $M_2$ acquires a smooth
structure. It also acquires a $C^0$-Riemannian metric.
Let $X \subset M$ denote the gluing locus. 

Given a compactly supported Lipschitz vector-valued half-density $V$
on $M$, using the isomorphism
$TM \big|_X = TX \oplus N_XM$, we can decompose $V$ on $X$ as
$V = V^T + V^\perp$, where $V^T$ is a section of $TX \otimes
{\mathcal D}_M^{\frac12} \big|_X$ and
$V^\perp$ is a section of $N_XM \otimes
{\mathcal D}_M^{\frac12} \big|_X$.  

Given $x \in X$, let $n_x$ be the inward pointing unit normal vector 
to $M_1$ at $x$. Given $V$ and $W$, decompose them along $X$ as
$V = V^T + V^\perp$ and $W = W^T + W^\perp$. Then
$\langle A^{(1)}(V^T_x, W^T_x) - A^{(2)}(V^T_x, W^T_x), n_x \rangle$ 
lies in ${\mathcal D}_M \big|_{\{x\}}$. 
We would get the same result if we switched
the roles of $M_1$ and $M_2$.

Similarly, $\langle V^\perp_x, W^\perp_x \rangle$ lies in
${\mathcal D}_M \big|_{\{x\}}$. We can compute the number $\langle
H^{(1)}_x - H^{(2)}_x, n_x \rangle$.
We would get the same result if we switched
the roles of $M_1$ and $M_2$.

Let $x^0$ be a local coordinate at $x$ so that 
$\frac{\partial}{\partial x^0}$ is a unit
normal to $X$ at $x$. There is a unique
linear map ${\mathcal T}_x : {\mathcal D}_M \big|_{\{x\}}
\rightarrow {\mathcal D}_X \big|_{\{x\}}$ so that
$dx^0 \otimes {\mathcal T}_x(\omega_x) = \omega_x$. This extends to a map
${\mathcal T} : C^\infty \left( X; {\mathcal D}_M \big|_X \right)
\rightarrow C^\infty(X; {\mathcal D}_X)$.

\begin{proposition} \label{4.9}
\begin{align} \label{4.10}
Q(V, W)  = & \int_{M_1} \langle V, \Ric(W) \rangle + 
\int_{M_2} \langle V, \Ric(W) \rangle \: + \\
& \int_X {\mathcal T} \left( 
\langle A^{(1)}(V^T, W^T) - A^{(2)}(V^T, W^T), n \rangle
+ \langle H^{(1)} - H^{(2)}, n \rangle \langle V^\perp, W^\perp \rangle
\right). \notag
\end{align}
\end{proposition}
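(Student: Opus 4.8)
The plan is to compute $Q(V,W)$ by the following strategy: first localize near a point $x \in X$ of the gluing locus using Fermi (normal) coordinates for $X$, reducing the problem to a one-variable integration by parts transverse to $X$; then identify the smooth bulk contributions with the Riemannian Ricci terms via the distributional formula (\ref{3.17})--(\ref{3.18}); and finally collect the boundary terms and recognize them as the jump in second fundamental forms. Away from $X$ the metric is smooth, so on each open piece $M_1 \setminus X$ and $M_2 \setminus X$ the identity $Q(V,W)=\int \langle V, \Ric(W)\rangle$ holds by Remark \ref{3.5} and (\ref{3.17}). The content is entirely at $X$.

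First I would set up coordinates: near $x \in X$, write $M = M_1 \cup_\phi M_2$ with a coordinate $x^0$ transverse to $X$ (positive into $M_1$, negative into $M_2$) coming from the normal exponential map, and coordinates $(x^1,\dots,x^{n-1})$ along $X$. In these coordinates the metric is continuous across $\{x^0=0\}$ but its $x^0$-derivative jumps; by the product structure from the normal exponential map one has $g_{00}=1$, $g_{0a}=0$, and $g_{ab}=g_{ab}(x^0,x^1,\dots)$ with $\partial_0 g_{ab}|_{x^0=0^+} = -2A^{(1)}_{ab}$ and $\partial_0 g_{ab}|_{x^0=0^-} = 2A^{(2)}_{ab}$ (signs to be fixed by the stated inward-pointing convention). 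Consequently the Christoffel symbols are $L^\infty$ (even piecewise smooth with a jump across $X$), so the connection is trivially tame by Proposition \ref{3.6}. Now I would insert this into the form (\ref{3.16}) for $Q_1$ plus the $Q_2$ term (\ref{3.9}): since the Christoffel symbols are bounded, $Q_2$ and the parts of $Q_1$ not hitting the jump are absolutely continuous and combine, on each side, into $\int_{M_i}\langle V,\Ric(W)\rangle$ via (\ref{3.17})--(\ref{3.18}). The genuinely distributional piece comes from the terms in (\ref{3.16}) of the form $\Gamma \,\partial_0(\cdots)$ where $\Gamma$ has a jump, or equivalently from $\partial_0\Gamma^0_{\cdot\cdot}$-type terms in (\ref{3.18}); integrating $\partial_0$ of a bounded-but-jumping quantity against a Lipschitz function produces a delta function on $X$ with weight equal to the jump $[\Gamma]_X$.

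The main computation is then to evaluate that jump. Writing $V=V^T+V^\perp$ along $X$ and expanding, the surviving delta-contribution is $\int_X (\text{jump in certain Christoffels}) \cdot v^k w^l$. The purely tangential components $V^T, W^T$ pair against the jump $[\partial_0 g_{ab}]_X = -2(A^{(1)}_{ab}+A^{(2)}_{ab})$... — more precisely, tracking the index combinations in (\ref{3.18}), the coefficient of $v^a w^b$ with $a,b$ tangential is $\tfrac12[\partial_0 g_{ab}] - (\text{trace terms})$, which after using $R_{(kl)}$'s structure reduces to $\langle A^{(1)}(V^T,W^T) - A^{(2)}(V^T,W^T), n\rangle$; the normal components $V^\perp, W^\perp$ pair against the jump in $\sum_a \Gamma^a_{0a} = \tfrac12\partial_0\log\det g_{ab}$, whose jump is $\langle H^{(1)}-H^{(2)}, n\rangle$, giving the term $\langle H^{(1)}-H^{(2)},n\rangle\langle V^\perp,W^\perp\rangle$; and the mixed $V^T$-$W^\perp$ and $V^\perp$-$W^T$ terms cancel (one should check this using the antisymmetry built into the second term of (\ref{3.3}) and the fact that $\partial_a g_{00}=0$). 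Finally the half-density bookkeeping: the integrand of $Q$ is naturally an $M$-density restricted to the hypersurface $X$, and the map ${\mathcal T}$ is exactly what converts $dx^0 \otimes (\text{an }X\text{-density})$ back to the $M$-density one started with, so the boundary integral is $\int_X {\mathcal T}(\cdots)$ as stated; this is where one verifies that the $\sqrt{dx^1\cdots dx^n}$ factors in $V,W$ combine with the $dx^1\cdots dx^{n-1}$ from integrating out $x^0$ to give precisely the claimed density.

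The step I expect to be the main obstacle is the bookkeeping of the index contractions in going from the raw jump of (\ref{3.18}) to the clean geometric expression $\langle A^{(1)}(V^T,W^T)-A^{(2)}(V^T,W^T),n\rangle + \langle H^{(1)}-H^{(2)},n\rangle\langle V^\perp,W^\perp\rangle$ — in particular confirming that the mixed tangential-normal terms cancel and that the coefficient in front of the tangential term is exactly $1$ (not $\tfrac12$ or $2$), which is sensitive to the symmetrization convention in $R_{(kl)}$ and to the chosen sign convention for $A$. A secondary but real point is checking symmetry of the answer under interchange of $M_1$ and $M_2$ (equivalently, independence of the choice of which side $x^0$ points into), which serves as a useful consistency check on the sign computation; the excerpt has already flagged that the individual pieces $\langle A^{(1)}(V^T,W^T)-A^{(2)}(V^T,W^T),n\rangle$ and $\langle H^{(1)}-H^{(2)},n\rangle$ are each invariant under this swap, so once the reduction is done correctly this should fall out automatically.
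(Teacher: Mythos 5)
Your proposal follows essentially the same route as the paper: compute the distributional Ricci tensor (\ref{3.18}) in Fermi coordinates transverse to $X$, where the bulk terms reproduce the smooth Ricci integrals over $M_1$ and $M_2$, and the jumps in $\Gamma^0_{\: \: kl}$ and $\sum_i \Gamma^i_{\: \: i0}$ across $\{x^0 = 0\}$ produce delta functions on $X$ weighted by $\langle A^{(1)} - A^{(2)}, n \rangle$ and $\langle H^{(1)} - H^{(2)}, n \rangle$, exactly as in the paper's (\ref{4.12})--(\ref{4.14}). The only point to tighten is your intermediate jump formula $[\partial_0 g_{ab}] = -2(A^{(1)}_{ab} + A^{(2)}_{ab})$: with both second fundamental forms written as vector-valued tensors paired against the single normal $n$ pointing into $M_1$ (the paper's convention), the jump is $-2 \langle A^{(1)}_{ab} - A^{(2)}_{ab}, n \rangle$, which is what yields the stated difference rather than a sum.
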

\begin{proof}
The Levi-Civita connection $\nabla$ on $M$ has $\{ \Gamma^i_{\: \: jk} \}$
in $L^2_{loc}$, 
so we can just compute the usual Ricci tensor (\ref{3.18}) as a
distribution.
On the interior of $M_1$ (resp. $M_2$), we clearly get the usual Ricci
tensor of $M_1$ (resp. $M_2$), so it suffices to look at what happens
near $X$.
Since $V$ and $W$ are compactly supported, we can
effectively reduce to the case when $X$ is compact.
We can choose a local coordinate $x^0$ near $X$, with
$n = \frac{\partial}{\partial x^0} \big|_X$ pointing into $M_1$, so that
the metric takes the form 
\begin{equation} \label{4.11}
g = (dx^0)^2 + h(x_0) + O \left( (x^0)^2 \right).
\end{equation}
Here we have a metric $h(x^0)$ on $X$ for $x^0 \in (-\epsilon, \epsilon)$.
As a function of $x^0$, the metric $h$ is continuous on $(- \epsilon,
\epsilon)$, smooth on $[0, \epsilon)$ and smooth on $(- \epsilon, 0]$.
The second fundamental form of $\partial M_1$ (resp. $\partial M_2$) is
$A^{(1)} = - \frac12 \left( \lim_{x^0 \rightarrow 0^+} \frac{dh}{dx^0}
\right) n$ 
(resp. $A^{(2)} = 
- \frac12 \left( 
\lim_{x^0 \rightarrow 0^- }\frac{dh}{dx^0} \big|_{x^0 = 0} \right) n$).
Using local coordinates $\{x^i\}$ on $X$, we have
\begin{align} \label{4.12}
\lim_{x^0 \rightarrow 0^+} \Gamma^0_{\: \: ij} & = \langle A^{(1)}_{ij}, 
n \rangle, \\
\lim_{x^0 \rightarrow 0^-} \Gamma^0_{\: \: ij} & = \langle A^{(2)}_{ij}, n 
\rangle, \notag \\
\lim_{x^0 \rightarrow 0^+} \Gamma^i_{\: \: j0} & = - \langle A^{i,(1)}_{\:
\: j}, n \rangle, \notag \\
\lim_{x^0 \rightarrow 0^-} \Gamma^i_{\: \: j0} & = - \langle 
A^{i,(2)}_{\: \: j}, n \rangle, \notag
\end{align}
The relevant terms in (\ref{3.18}) are
\begin{align} \label{4.13}
R_{(kl)} & = \partial_0 \Gamma^0_{\: \: kl}+ \ldots, \\
R_{(00)} & = - \partial_0 \Gamma^i_{\: \: i0} + \ldots. \notag
\end{align}
Hence the singular part of the Ricci measure is
\begin{align} \label{4.14}
R_{(kl),sing} & = \langle A^{(1)}_{kl} - A^{(2)}_{kl}, 
n \rangle \delta_0(x^0), \\
R_{(00),sing} & = \langle H^{(1)} - H^{(2)}, 
n \rangle \delta_0(x^0). \notag
\end{align}
The proposition follows.
\end{proof}

\begin{example} \label{4.15}
Let $M_1$ and $M_2$ each be the result of taking a two-dimensional
cone with total cone angle $2 \pi c$ and trunctating it at a distance
$L$ from the vertex. Then the contribution to the Ricci measure of $M$
from the circle gluing locus is
$\frac{2}{L} \int_X {\mathcal T} \langle V, W \rangle$. This is consistent
with Example \ref{4.7}. 
\end{example}

\begin{remark}
Based on Proposition \ref{4.9}, if $M$ is a Riemannian manifold with boundary
then it would be natural to consider
\begin{equation}
Q(V, W)  =  \int_{M} \langle V, \Ric(W) \rangle + 
\int_{\partial M} {\mathcal T} \left( 
\langle A(V^T, W^T), n \rangle
+ \langle H, n \rangle \langle V^\perp, W^\perp \rangle
\right)
\end{equation}
to define the Ricci measure of $M$.
\end{remark}

\subsection{Families of cones} \label{subsect4.3}

We first consider the case of a single cone.

\begin{proposition} \label{4.16}
For $\alpha < 1$, put $g = |x|^{-2\alpha} \sum_{i=1}^n (dx^i)^2$
on $\R^n$. Then the Levi-Civita connection is tame.
If $n = 2$ then $Q(V, W)$ is given by (\ref{4.6}). If $n > 2$ then
$Q(V, W) = \int_{\R^n} \langle V,\Ric(W) \rangle$. That is, if $n > 2$
then there is no singular contribution to the Ricci measure from 
the vertex of the cone.
\end{proposition}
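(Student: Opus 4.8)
The plan is to reduce everything to the computation of the Christoffel symbols of the conical metric $g = |x|^{-2\alpha}\sum_i (dx^i)^2$ and then apply Proposition \ref{3.6} together with the distributional formula (\ref{3.18}). Writing $g = e^{2\phi}\sum_i (dx^i)^2$ with $\phi = -\alpha \log|x|$, the Christoffel symbols have the conformal form
\begin{equation}
\Gamma^i_{\: \: jk} = \delta_{ij}\partial_k\phi + \delta_{ik}\partial_j\phi - \delta_{jk}\partial_i\phi,
\end{equation}
and $\partial_i\phi = -\alpha\, x^i/|x|^2$, which is homogeneous of degree $-1$. In $\R^n$ this lies in $L^p_{loc}$ precisely for $p < n$; in particular it is in $L^1_{loc}$ for every $n \ge 2$, and for $n > 2$ it even lies in $L^2_{loc}$. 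So the first hypothesis of Proposition \ref{3.6}, local integrability of $\Gamma^i_{\: \: jk}$, holds in all dimensions. For the second hypothesis I would compute $\sum_{i,j}(\Gamma^i_{\: \: kl}\Gamma^j_{\: \: ji} - \Gamma^j_{\: \: ki}\Gamma^i_{\: \: lj})$: each quadratic term is homogeneous of degree $-2$, hence in $L^1_{loc}$ iff the combination is integrable near $0$, which again holds for $n \ge 2$ since $|x|^{-2}$ is locally integrable in dimension $\ge 2$ (only the borderline case $n=2$ for the linear terms is delicate, but there $|x|^{-1}$ is still fine). This establishes tameness via Proposition \ref{3.6}, and one should double check (as in the proof of Proposition \ref{4.1}) that Assumption \ref{ass} is met — here with $h = 0$ away from the vertex — so that a genuine Ricci measure exists.

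For the identification of $Q(V,W)$ when $n = 2$, I would simply invoke that this is the special case $g = ((x^1)^2+(x^2)^2)^{-\alpha}((dx^1)^2+(dx^2)^2)$ already treated in Example \ref{4.5}, giving formula (\ref{4.6}) with the point mass $2\pi\alpha$ at the origin. So the real content is the case $n > 2$: one must show the distributional tensor $R_{(kl)}$ from (\ref{3.18}) has no singular part supported at $0$, i.e. it agrees with the locally $L^1$ smooth-Ricci tensor of the cone on $\R^n \setminus \{0\}$ as a distribution on all of $\R^n$. Since for $n > 2$ the Christoffel symbols are in $L^2_{loc}$, formula (\ref{3.18}) makes distributional sense directly (no half-density trickery needed), and the question reduces to whether $\sum_i \partial_i \Gamma^i_{\: \: kl} - \tfrac12\partial_k\Gamma^i_{\: \: il} - \tfrac12\partial_l\Gamma^i_{\: \: ik}$, computed as distributions, picks up a delta at the origin beyond its pointwise value away from $0$.

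The key step is therefore a distributional derivative computation. Each $\Gamma^i_{\: \: jk}$ is, up to sign and index bookkeeping, a constant multiple of $\partial_j\phi = -\alpha\, x^j/|x|^2$, so the singular terms in $R_{(kl)}$ are linear combinations of second derivatives $\partial_a\partial_b\phi$ and of $\sum_a\partial_a\partial_a\phi = \Delta\phi$. Now $\phi = -\alpha\log|x|$ is harmonic on $\R^n\setminus\{0\}$ for $n=2$ but for $n > 2$ is \emph{not} a multiple of the fundamental solution $|x|^{2-n}$; rather $\Delta(\log|x|) = (n-2)|x|^{-2}$ pointwise, and crucially $|x|^{-2} \in L^1_{loc}$ for $n \ge 3$ with \emph{no} extra distributional delta — the potential-theoretic point is that $\log|x|$ is in $W^{1,2}_{loc}$ and its distributional Laplacian is exactly the locally integrable function $(n-2)|x|^{-2}$. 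Similarly each mixed second distributional derivative $\partial_a\partial_b(x^j/|x|^2)$ equals its classical value away from $0$ plus a delta term whose coefficient is $\int_{S^{n-1}}(\text{homogeneous degree }-n\text{ kernel})$; one checks these surface integrals cancel in the particular combination appearing in $R_{(kl)}$, or more cleanly, one notes $x^j/|x|^2 = \partial_j(\log|x|) \in W^{1,p}_{loc}$ for $p < n/(n-1)\cdot$(something) — better: since $\log|x| \in W^{2,p}_{loc}$ for all $p < n/(n-1)$ is false, instead argue directly that $x^j/|x|^2$ has distributional gradient equal to its pointwise gradient (which is homogeneous of degree $-2$, hence $L^1_{loc}$) with no singular part, because $x^j/|x|^2$ has no jump and is in $W^{1,1}_{loc}$. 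I expect this last point — cleanly certifying that the relevant distributional second derivatives of a $|x|^{-1}$-type function carry no point mass at the origin when $n > 2$ — to be the main obstacle, and the way to dispatch it is the standard lemma that a locally integrable function which is $W^{1,1}_{loc}$ and whose pointwise (a.e.-defined) gradient is locally integrable has distributional gradient equal to that pointwise gradient; applying this twice, once to $\phi$ and once to its gradient, kills all candidate delta terms and leaves only the smooth interior Ricci tensor, proving $Q(V,W) = \int_{\R^n}\langle V,\Ric(W)\rangle$ for $n > 2$.
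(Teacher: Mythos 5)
Your proposal follows essentially the same route as the paper: verify tameness from the conformal Christoffel symbols via Proposition \ref{3.6}, quote Example \ref{4.5} for $n=2$, and for $n>2$ observe that the Christoffel symbols lie in $L^2_{loc}$ so that (\ref{3.18}) makes distributional sense, with no delta at the origin because the symbols are only $|x|^{-1}$-singular rather than $|x|^{-(n-1)}$-singular (the paper states exactly this homogeneity count; your cutoff/surface-term argument is the standard way to certify it). One small slip: $|x|^{-2}$ is \emph{not} locally integrable in $\R^2$, so your justification of the second integrability hypothesis of Proposition \ref{3.6} for $n=2$ is wrong as stated; in that case one needs the identical vanishing of the combination $\sum_{i,j}(\Gamma^i_{\: \: kl}\Gamma^j_{\: \: ji}-\Gamma^j_{\: \: ki}\Gamma^i_{\: \: lj})$ for conformally flat surface metrics, as recorded in the proof of Proposition \ref{4.1}, which is how the paper disposes of $n=2$. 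With that correction the argument is complete and matches the paper's.
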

\begin{proof}
The case $n=2$ was handled in Example \ref{4.5}.  If $n > 2$ then we can use
the formula for conformal transformations from (\ref{4.2}), with
$\phi = - \alpha \ln |x|$. In this case $\partial_i \phi \in L^2_{loc}$, so the
formula (\ref{3.18}) makes sense as a distribution.  However, since
$|\partial_i \phi| \sim |x|^{-1}$, there is no
contribution to $Q(V,W)$ from the origin.
(To have such a contribution, one would need
to have $|\partial_i \phi| \sim |x|^{-(n-1)}$.)
\end{proof}

We now consider a family of cones.
Let $\pi : M \rightarrow B$ be an $n$-dimensional 
real vector bundle over a Riemannian manifold $B$. 
Given $b \in B$, we write $M_b = \pi^{-1}(b)$.
Let $h$ be a Euclidean
inner product on $M$ and let $D$ be an $h$-compatible connection.
There is a natural Riemannian metric $g_0$ on $M$ with $\pi : M \rightarrow B$
being a Riemannian submersion, so that the restrictions of $g_0$ to fibers
are specified by $h$, and with horizontal subspaces coming from 
$D$.
Let $s : B \rightarrow M$ denote the zero section and let
$Z$ denote its image. Given $\alpha < 1$, 
let $g$ be the Riemannian metric on
$M - Z$ obtained from $g_0$, at $m \in M-Z$, by multiplying the fiberwise 
component of $g_0$ by $h(m,m)^{- \alpha}$.

Given $z \in Z$, let
$\dvol_{M_{\pi(z)},z}$ denote the Riemannian density at $z$ of the fiber 
$M_{\pi(z)}$, induced from $h_z$.
There is a
unique linear map ${\mathcal T}_z : {\mathcal D}_{M} \big|_{\{z\}} 
\rightarrow {\mathcal D}_{Z} \big|_{\{z\}}$ so that 
$\dvol_{M_{\pi(z)},z} \otimes {\mathcal T}_z(\omega_z) = \omega_z$ for all
$\omega_z \in {\mathcal D}_{M} \big|_{\{z\}}$. This extends to a linear map
${\mathcal T} : C^\infty(Z; {\mathcal D}_M \big|_Z) \rightarrow
C^\infty(Z; {\mathcal D}_Z)$.

Given a compactly supported Lipschitz vector-valued half-density $V$ on $M$,
we can decompose its restriction to $Z$ orthogonally (with respect to $g_0$)
as $V \big|_Z = V^{tan} + V^{nor}$, where $V^{tan}$ is tangential to $Z$ and
$V^{nor}$ is normal to $Z$, i.e. tangential to the fibers of the vector
bundle.

\begin{proposition} \label{4.17}
The Levi-Civita connection $\nabla$ of $g$ is tame.
If $n=2$ then
\begin{equation} \label{4.18}
Q(V, W) = \int_M \langle V, \Ric(W) \rangle +
2 \pi \alpha \int_Z  {\mathcal T}
\left( \langle V^{nor}, W^{nor} \rangle_{g_0} \right).
\end{equation}
If $n > 2$ then $Q(V, W) = \int_M \langle V, \Ric(W) \rangle$.
\end{proposition}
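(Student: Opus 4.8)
The plan is to localize near the singular set $Z$ and to reduce the problem, in the fiber directions, to the cone computations of Proposition \ref{4.16} and Example \ref{4.5}. Since $g$ is smooth on $M - Z$, a Lipschitz partition of unity lets us treat a neighborhood of $Z$ separately, and there I would work in an adapted chart about a point $z_0 \in Z$: geodesic normal coordinates $(y^a)$ on $B$ centered at $\pi(z_0)$ together with a local $h$-orthonormal frame of the vector bundle, giving fiber coordinates $(x^i)$. In such a frame $h_{ij} = \delta_{ij}$, the connection $D$ enters through the vertical forms $\eta^i = dx^i + \sum_{a,j} c^i_{\: \: aj}(y)\, x^j\, dy^a$ with $c^i_{\: \: aj} = - c^j_{\: \: ai}$, and, crucially, $r^2 := h(m,m) = \sum_i (x^i)^2$ depends only on the $x$'s; thus $g = \sum_{a,b} g_{ab}(y)\, dy^a\, dy^b + r^{-2\alpha} \sum_i \eta^i \eta^i$, with $g_{ab}(y) = \delta_{ab} + O(|y|^2)$.

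The first step is to compute the Christoffel symbols $\Gamma^\mu_{\: \: \nu\rho}$ of $g$ in this chart and to check tameness via Proposition \ref{3.6}. The only components that are unbounded near $Z$ come from $\partial_i(r^{-2\alpha})$ contracted with the vertical inverse metric $\sim r^{2\alpha}$: the fully vertical block is precisely the conformal expression (\ref{4.2}) with $\phi = - \alpha \ln |x|$, up to bounded corrections, while all other components are $O(|x|^{-1})$ or better; in particular each $\Gamma^\mu_{\: \: \nu\rho}$ is locally integrable. When $n > 2$, the locus $Z$ has codimension $\ge 3$, so the distributional derivatives in (\ref{3.18}) agree with the classical ones --- no measure on $Z$ is created, as in Proposition \ref{4.16} --- and a direct computation, using the antisymmetry of $c^i_{\: \: aj}$ and the boundedness of the base Christoffel symbols, shows that the Ricci tensor of $g$ is locally integrable near $Z$; hence $Q(V,W) = \int_M \langle V, \Ric(W) \rangle$. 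When $n = 2$, each $\Gamma^\mu_{\: \: \nu\rho}$ lies in $L^1_{loc}$, and the combination $\sum_{i,j}(\Gamma^i_{\: \: kl}\Gamma^j_{\: \: ji} - \Gamma^j_{\: \: ki}\Gamma^i_{\: \: lj})$ of Proposition \ref{3.6} is locally integrable because its only borderline ($|x|^{-2}$) part is the fully vertical conformal combination, which vanishes identically as in the proof of Proposition \ref{4.1}; so $\nabla$ is tame.

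It remains, for $n = 2$, to identify $Q$. Write $Q = Q_1 + Q_2$ as in (\ref{3.8})--(\ref{3.9}), with $Q_1$ put in the form (\ref{3.16}). The integrand of $Q_2$ is locally integrable (its vertical part vanishing as above), and together with the classical $\partial \Gamma$ contribution it assembles into $\int_M \langle V, \Ric(W) \rangle$. For $Q_1$ I would integrate by parts in the fiber directions after deleting an $\epsilon$-tube $\{ r \ge \epsilon \}$ about $Z$: the interior terms yield further integrable Ricci contributions, while the boundary integral over $\{ r = \epsilon \}$ has, as $\epsilon \to 0$, the same limit as for a single two-dimensional cone, namely $2\pi\alpha$ times the identity on the fiber evaluated on $(V^{nor}, W^{nor})$ and integrated over $Z$ --- this is Example \ref{4.5} carried out fiberwise. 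The density-line-bundle bookkeeping is exactly what $\mathcal{T}$ records: the fiberwise density $\dvol_{M_{\pi(z)}}$ trivializes the normal-density factor of $\mathcal{D}_M \big|_Z$ and the residual factor is sent to $\mathcal{D}_Z$; and since $g_0$ restricts to $h$ on vertical vectors and extends smoothly across $Z$, the pairing that survives in the limit is $\langle V^{nor}, W^{nor} \rangle_{g_0}$, which gives (\ref{4.18}).

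I expect the main obstacle to be a clean organization of these Christoffel-symbol products together with the density bookkeeping: one must confirm that the terms of the (distributional) Ricci tensor of $g$ that are a priori not locally integrable --- the square of the conformal vertical block when $n = 2$, and products involving the "cross" symbols $\Gamma^a_{\: \: ij} \sim |x|^{-2\alpha}$ --- either cancel (as in Proposition \ref{4.1}) or do not actually arise, on account of the index structure, so that for $n = 2$ exactly one codimension-$2$ measure of weight $2\pi\alpha$ survives along $Z$; and that the identification of the density factors via $\mathcal{T}$ is coordinate-free, so that the answer is the invariant formula (\ref{4.18}) and reduces to Example \ref{4.5} when $B$ is a point.
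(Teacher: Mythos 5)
Your proposal is correct in outline and shares the paper's setup, but it isolates the singular contribution by a genuinely different device. Like the paper, you work in adapted coordinates (base coordinates pulled back from $B$, fiberwise linear orthogonal coordinates for $h$, the connection $D$ entering through $dx^i + \sum_{\beta,j} C^i_{\: \: j\beta} x^j dx^\beta$), compute the Christoffel symbols --- the paper normalizes $C^i_{\: \: j\beta}(b)=0$ along a fixed fiber to obtain the explicit list (\ref{4.23}), in which the fully vertical block is the conformal cone expression and the cross terms are bounded or vanish --- and check tameness through Proposition \ref{3.6}. Where you diverge is in identifying $Q$: you propose excising an $\epsilon$-tube about $Z$, integrating $Q_1$ by parts in the fiber directions, and computing the limiting boundary term on $\{r=\epsilon\}$ fiberwise as in Example \ref{4.5}. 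The paper instead notes that $\pi$ is still a Riemannian submersion for $g$ with totally geodesic fibers, computes the curvature $A^i_{\: \: \beta\gamma}$ of the horizontal distribution, and invokes the O'Neill formulas (asserted to persist for the Ricci measure) to conclude that the only singular contribution is the fiberwise one, which is then read off from Proposition \ref{4.16}. Your route is more elementary and self-contained, but it carries the burden you yourself flag: one must verify that the mixed symbols ($\Gamma^\beta_{\: \: ij}$, $\Gamma^\beta_{\: \: \gamma i}$, of order $|x|^{-2\alpha}$ and $|x|^{1-2\alpha}$) contribute only integrable interior terms and $o(1)$ boundary terms. The paper's route buys a structural reason why all horizontal and mixed contributions are regular, at the cost of justifying the O'Neill formulas in this singular setting. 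Both arguments reduce the weight of the codimension-two measure to the single two-dimensional cone, and your description of the density map ${\mathcal T}$ agrees with the paper's, so the invariant formula (\ref{4.18}) comes out the same.
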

\begin{proof}
We can choose local coordinates $\{x^\beta, x^i\}$ for $M$ so that
the coordinates
$\{x^\beta\}$ pullback from $B$ and 
the coordinates $\{x^i\}$  restrict to the fibers as
linear orthogonal coordinates with respect to $h$. In terms of such
coordinates, we can write 
\begin{equation} \label{4.19}
g_0 = \sum_{\beta, \gamma} k_{\beta \gamma} dx^\beta dx^\gamma + 
\sum_i \left( dx^i + \sum_{\beta, j} 
C^i_{\: \: j \beta} x^j dx^\beta \right)^2,
\end{equation} 
where $\{C^i_{\: \: j \beta}\}$ is the local description of the
connection $D$ and $\{k_{\beta \gamma} \}$ is the local description of
the Riemannian metric on $B$. Then
\begin{equation} \label{4.20}
g = \sum_{\beta, \gamma} k_{\beta \gamma} dx^\beta dx^\gamma + 
\left( \sum_l (x^l)^2 \right)^{- \alpha} \sum_i \left( dx^i + \sum_{\beta, j} 
C^i_{\: \: j \beta} x^j dx^\beta \right)^2,
\end{equation} 

Let $\{\widehat{\Gamma}^\beta_{\: \: \gamma \delta}\}$ denote the
Christoffel symbols of the Riemannian metric $k$ on $B$.
Put
\begin{equation} \label{4.21}
F^i_{\: \: j \beta \gamma} = 
\partial_\beta C^i_{\: \: j \gamma} - 
\partial_\gamma C^i_{\: \: j \beta} + 
\sum_k C^i_{\: \: k \beta} C^k_{\: \: j \gamma} -
\sum_k C^i_{\: \: k \gamma} C^k_{\: \: j \beta},
\end{equation}
the curvature of $D$.

Given $b \in B$, we can choose the coordinates $\{x^i\}$ near
the fiber $M_b$ so that $C^i_{\: \: j \beta}(b) = 0$. Then on 
$M_b$, we have 
\begin{equation} \label{4.22}
g \big|_{M_b} = 
\sum_{\beta, \gamma} k_{\beta \gamma} dx^\beta dx^\gamma + 
\left( \sum_l (x^l)^2 \right)^{- \alpha} \sum_i \left( dx^i \right)^2.
\end{equation}
One finds that on $M_b$,
\begin{align} \label{4.23}
\Gamma^i_{\: \: jk} & = - \frac{\alpha}{|x|^2} \left( x_k \delta^i_j +
x^j \delta^i_k - x^i \delta_{jk} \right), \\
\Gamma^i_{\: \: \beta \gamma} & = \frac12 \sum_j x^j \left( 
\partial_\gamma C^i_{\: \: j \beta} + \partial_\beta C^i_{\: \: j \gamma}
\right),
\notag \\
\Gamma^{\beta}_{\: \: \gamma i} & =
\Gamma^{\beta}_{\: \: i \gamma} = - \frac12 |x|^{- 2 \alpha}
\sum_{\sigma, j} k^{\beta \sigma}  x^j F^i_{\: \: j \sigma \gamma}, \notag \\
\Gamma^{\beta}_{\: \: \gamma \delta} & = 
\widehat{\Gamma}_{\: \: \gamma \delta}, \notag \\
\Gamma^{\beta}_{\: \: ij} & = \Gamma^{i}_{\: \: j \beta} =
\Gamma^{i}_{\: \: \beta j} = 0. \notag 
\end{align}
Using Proposition \ref{3.6}, one can check that $\nabla$ is tame.

The Ricci curvature of $\nabla$ can be computed using the
splitting of $TM$ into its vertical and horizontal components relative to
$\pi$.  The corresponding O'Neill formulas still hold for the
Ricci measure. In the present case, the fibers
of $\pi$ are totally geodesic with respect to the metric $g$. Relative to the
vertical orthonormal coframe 
\begin{equation} \label{4.24}
\tau^i = |x|^{- \alpha}
\left( dx^i + \sum_j C^i_{\: \: j \beta} x^j 
dx^\beta \right)
\end{equation}
and a local orthonormal coframe $\{\tau^\beta\}$ for $k$,
one finds that the curvature of the horizontal distribution is
given by 
\begin{equation} \label{4.25}
A^i_{\: \: \beta \gamma} = \frac12 |x|^{- \alpha} \sum_j
F^i_{\: \: j \beta \gamma} x^j.
\end{equation}
Then using the O'Neill formulas, as given in 
\cite[(4.7)]{Lott (2014)}, one finds that the only singular contribution to the
Ricci measure is the fiberwise contribution coming from the
singular points. Using Proposition \ref{4.16}, the proposition follows. 
\end{proof}

\subsection{K\"ahler manifolds} \label{subsect4.4}

Let $M$ be a complex manifold of complex dimension $n$.
Suppose that $M$ admits
a K\"ahler metric $h$ which is $W^{1,1}_{loc}$-regular, 
in the sense of Definition
\ref{w11}.
Suppose that the Levi-Civita connection is tame.  The Ricci
measure of $(M, h)$ can be described as follows. 
Let $V$ be a compactly supported Lipschitz section of
$T^{(1,0)}M \otimes {\mathcal D}^{\frac12}$ and let 
$W$ be a compactly supported Lipschitz section of
$T^{(0,1)}M \otimes {\mathcal D}^{\frac12}$.
Then $Q(V, W) = \int_M q(V, W)$, where the measure
$q(V, W)$ has the following description in local coordinates.
Write $V = \sum_i v^i \partial_{z^i} \otimes 
\sqrt{dx^1 dy^1 \ldots dx^n dy^n}$ and 
$W = \sum_j w^{\overline{j}} \partial_{\overline{z}^j} \otimes 
\sqrt{dx^1 dy^1 \ldots dx^n dy^n}$. Then
\begin{equation} \label{4.26}
q(V,W) = - \sum_{i,j} \left( \partial_{z^i} \partial_{\overline{z}^j}
\log \det h \right) \: v^i \: w^{\overline{j}} \: dx^1 dy^1 \ldots dx^n dy^n.
\end{equation}

\subsection{Limit spaces of manifolds with lower bounds on 
Ricci curvature and injectivity radius} \label{andersoncheeger}

Given $n \in \Z^+$, $K \in \R$ and $i_0 > 0$, let
$(X, x)$ be a pointed Gromov-Hausdorff limit of a sequence
$\{(M_i, m_i, g_i)\}_{i=1}^\infty$
of complete $n$-dimensional pointed Riemannian manifolds 
with $\Ric(g_i) \ge K g_i$ and $\inj_{m_i} \ge i_0$. From
\cite[Theorem 0.2 and p. 268]{Anderson-Cheeger (1992)}, 
for any $p \in (n, \infty)$, the 
space $X$ is a $L^{2,p}$-manifold with a Riemannian metric $g_X$
that is locally $L^{1,p}$-regular. In particular, for any $\alpha \in (0,1)$
the manifold $X$ is also
a $C^{1,\alpha}$-manifold and hence has an underlying smooth structure,
which is unique up to diffeomorphism
\cite[Theorem 2.10]{Hirsch}.
In order to apply the formalism of this paper, we extend the smooth structure
to a $C^{1,1}$-structure. Any two such $C^{1,1}$-structures
are related by a homeomorphism $\phi$ of $X$ that is
$L^{2,p}$-regular for all $p \in (n, \infty)$.

As mentioned, with respect to the $L^{2,p}$-manifold structure, 
$g_X$ is locally $L^{1,p}$-regular (and also locally $C^\alpha$-regular).  
Hence the same will be true with
respect to the smooth structure, and the ensuing $C^{1,1}$-structure.
By H\"older's inequality, $g_X$ is also locally $L^{1,1}$-regular.
Since $g_X$ is nondegenerate and continuous in local coordinates, 
it has a continuous inverse.
Putting $S^l_{\: \: jk} = - \sum_i g^{li} \partial_k g_{ij}$,  
the metric $g_X$ lies in $W^{1,1}_{loc}$ in the sense of Definition 
\ref{w11}. 
Its Christoffel symbols lie in $L^2_{loc}$ (since $g_X$ is locally
$L^{1,2}$-regular). Hence the
Levi-Civita connection of $g_X$ is tame. 
Also from \cite{Anderson-Cheeger (1992)}, for large $i$ there
are pointed diffeomorphisms $\phi_i : (X, x) \rightarrow (M, m_i)$
so that $\lim_{i \rightarrow \infty} \phi_i^* g_i = g_X$ in
$L^{1,p}_{loc}$. By Remark \ref{lq}, it follows that
$Q(V,W) \ge K \int_X g_{X,ij} V^i W^j$. Hence there is a Ricci measure
${\mathcal R}$.

Since the Christoffel symbols lie in $L^2_{loc}$, the
quadratic form $Q$ can be extended to compactly supported $V$ and $W$ in
$\bigcap_{p \in (n, \infty)} L^{1,p}$, the latter of which also lies in 
$\bigcap_{\alpha \in (0,1)} C^\alpha$.
It follows that
$Q$ is covariant with respect to diffeomorphisms $\phi$ of $X$ that
are $L^{2,p}$-regular for all $p \in (n, \infty)$.
Hence ${\mathcal R}$ is independent of the choice of
$C^{1,1}$-structure.

\subsection{Remarks} \label{subsect4.5}

\begin{remark} \label{4.27}
Let $g$ be a Riemannian metric on $M^n$ which lies in $W^{1,1}_{loc}$, in the
sense of Definition \ref{w11}.
Suppose that the Levi-Civita connection is tame.

Suppose that the length metric gives a well-defined
compact metric space $X$.
If $Q \ge 0$ then a natural question is whether
$X$ has nonnegative $n$-Ricci curvature (with respect to the
Hausdorff measure) in the sense of
\cite{Lott-Villani (2009),Sturm (2006)}. 
One way to answer this would be to show
that $X$ is the Gromov-Hausdorff limit of a sequence of smoothings
$\{(M, g_i)\}_{i=1}^\infty$ of $(M,g)$, with the Ricci curvature of
$(M, g_i)$ bounded below by $- \frac{1}{i} g_i$. In the setting of
Subsection \ref{subsect4.2}, i.e. gluing Riemannian manifolds along boundaries,
the argument for this appears in
\cite[Section 4]{Perelman}.

Conversely, one can ask whether $X$ having nonnegative $n$-Ricci curvature
implies that $Q \ge 0$.
\end{remark}

\begin{remark} \label{4.28}
Suppose that the metric space $X$ of Remark \ref{4.27} has $Q \ge 0$ and 
nonnegative Ricci curvature in the sense of 
\cite{Lott-Villani (2009),Sturm (2006)}.
One can ask if there is a relationship between the possible
singularity of the Ricci measure and the existence of a singular
stratum of $X$ in the sense of \cite{Cheeger-Colding (1997)}.
Example \ref{4.7} shows that there is no direct relationship, since
the Ricci measure may be singular even if all of the tangent
cones are Euclidean.  However, one can ask whether the existence of
a codimension-two singular stratum (i.e. 
${\mathcal S}_{n-2} \neq {\mathcal S}_{n-3}$ 
in the notation of \cite{Cheeger-Colding (1997)})
forces the Ricci measure
to be singular.
\end{remark}

\begin{remark} \label{4.29}
If $X$ has no singular strata of codimension less than three
(i.e. ${\mathcal S} = {\mathcal S}_{n-3}$ 
in the notation of \cite{Cheeger-Colding (1997)})
then one can ask whether
a compactly supported 
Lipschitz vector-valued half-density $V$ necessarily has $\nabla V$
square integrable. The
relevance of this would be for nonmanifold spaces, where
the square integrability of $\nabla V$ on the regular set would be
a natural condition, whereas the requirement of $V$ being Lipschitz
may not make sense in a neighborhood of a singular point.
See Example \ref{5.15}. 

We note that the singular K\"ahler-Einstein
metrics considered in \cite{Eyssidieux-Guedj-Zeriahi (2009)} 
do not have any singular strata of real codimension two.
\end{remark}

\begin{remark}
Suppose that $X$ is a pointed Gromov-Hausdorff limit of a sequence of
complete $n$-dimensional pointed 
Riemannian manifolds with a uniform lower bound on
their Ricci curvature.  For concreteness, we consider the case when the
Hausdorff dimension of $X$ is $n$, i.e. when the sequence is noncollapsing.
As in \cite[Section 3]{Cheeger-Colding (2000)}, 
the complement of a measure zero subset of $X$ can
be covered by a countable union of sets, each of which is biLipschitz
to a Borel subset of $\R^n$. Furthermore, the transition maps between
such sets can be taken to be $C^{1,1}$-regular in a natural weak sense
\cite[Theorem 1.5]{Honda (2014)}. Using this structure, one can define
a Levi-Civita connection on $X$ with measurable Christoffel symbols 
\cite[Section 3]{Honda (2014)}. 
The first question is whether the Levi-Civita connection is
necessarily
tame, in the sense of satisfying the equivalent condition of
Proposition \ref{3.6}. This is
true in the setting of Subsection \ref{andersoncheeger}.

If the Levi-Civita connection is tame then it should be possible to use this
to construct the
Ricci measure on the regular set of $X$ (compare with
Example \ref{4.7} and Subsection \ref{andersoncheeger}).
Based on Proposition \ref{4.17}, one would expect that any reasonable
notion of the Ricci measure on the singular set
should vanish on ${\mathcal S}_{n-3}$, and be given
on ${\mathcal S}_{n-2}$ by an analog of the last term in (\ref{4.18}).
\end{remark}

\section{Weak Ricci flow} \label{sect5}

In this section we give notions of weak Ricci flow solutions.  In Subsection
\ref{subsect5.1} we prove an integral identity for smooth Ricci flow solutions.
In Subsection \ref{subsect5.2} we define tame Ricci flow solutions, give a 
compactness result and discuss some
examples.  In Subsection \ref{subsect5.3} we define the broader class of
cone-preserving Ricci flow solutions and give further examples.

\subsection{An integral identity} \label{subsect5.1}

Let $M$ be a smooth manifold.  Let $\{g(t)\}_{t \in [0, T)}$ 
be a smooth one-parameter family of Riemannian metrics on $M$.

\begin{proposition} \label{5.1}
Let $\{V(t)\}_{t \in [0,T)}$ and $\{W(t)\}_{t \in [0,T)}$
be one-parameter families of
vector-valued half-densities on $M$.
We assume that for each $T^\prime \in [0, T)$,
the family $V$ has compact support in $M \times [0, T^\prime]$ and
is Lipschitz there, and similarly for the family $W$.
Then
$(M, g(\cdot))$ satisfies the Ricci flow equation
\begin{equation} \label{rf}
\frac{dg}{dt} \: = \: - \: 2 \Ric_{g(t)}
\end{equation}
 if and only if for every
such $V$ and $W$, and every $t \in [0, T)$, we have
\begin{align} \label{5.2}
& \int_M \sum_{ij} g_{ij}(t) V^i(t) W^j(t) = 
\int_M \sum_{ij} g_{ij}(0) V^i(0) W^j(0) \: +  \\
& \int_0^t \int_M \sum_{i,j} \left[
g_{ij} (\partial_s V^i) W^j +  g_{ij} V^i (\partial_s W^j) -
2 \left( \nabla_i V^i  \right) \left( \nabla_j W^j \right) + 
2 \left( \nabla_i V^j \right) \left( \nabla_j W^i \right) \right](s) \:
ds. \notag
\end{align}
\end{proposition}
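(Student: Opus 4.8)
The plan is to differentiate both sides of \eqref{5.2} in $t$ and show that the resulting pointwise-in-time identity is equivalent to the Ricci flow equation \eqref{rf}, given the flexibility in the choice of $V$ and $W$. Since \eqref{5.2} at $t=0$ is a tautology, it suffices to prove that the $t$-derivatives of the two sides agree for all $t$; the ``if'' direction then follows by integrating back up from $t = 0$, and the ``only if'' direction follows because the integrands in the $ds$-integral are continuous in $s$, so equality of the integrals for all $t$ forces equality of the integrands.

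First I would compute $\frac{d}{dt} \int_M \sum_{ij} g_{ij}(t) V^i(t) W^j(t)$. Here the key structural point --- and the reason half-densities are used rather than $1$-forms --- is that $\sum_{ij} g_{ij} V^i W^j$ is already a genuine density on $M$ (each $V^i, W^j$ is a half-density), so there is no volume form to differentiate and hence no stray scalar-curvature term. Thus
\begin{equation} \label{pp1}
\frac{d}{dt} \int_M \sum_{ij} g_{ij} V^i W^j = \int_M \sum_{ij} \left[ (\partial_t g_{ij}) V^i W^j + g_{ij} (\partial_t V^i) W^j + g_{ij} V^i (\partial_t W^j) \right].
\end{equation}
On the other side, $\frac{d}{dt}$ of the $ds$-integral in \eqref{5.2} is simply the integrand evaluated at $s = t$, namely
\begin{equation} \label{pp2}
\int_M \sum_{i,j} \left[ g_{ij}(\partial_t V^i) W^j + g_{ij} V^i (\partial_t W^j) - 2(\nabla_i V^i)(\nabla_j W^j) + 2 (\nabla_i V^j)(\nabla_j W^i) \right].
\end{equation}
Comparing \eqref{pp1} and \eqref{pp2}, the $\partial_t V$ and $\partial_t W$ terms cancel, and equality of the two sides for all $V, W, t$ becomes equivalent to
\begin{equation} \label{pp3}
\int_M \sum_{ij} (\partial_t g_{ij}) V^i W^j = - \, 2 \int_M \sum_{i,j} \left[ (\nabla_i V^i)(\nabla_j W^j) - (\nabla_i V^j)(\nabla_j W^i) \right] = - \, 2 \, Q(V,W),
\end{equation}
where $Q$ is the quadratic form \eqref{3.3} of the Levi-Civita connection of $g(t)$.

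Now I would invoke the Bochner identity in the form already recorded in the paper: by \eqref{3.17}--\eqref{3.18} (equivalently \eqref{1.2}), for the Levi-Civita connection $Q(V,W) = \int_M \sum_{k,l} v^k R_{(kl)} w^l$, and since the Levi-Civita Ricci tensor is symmetric, $R_{(kl)} = R_{kl}$. Writing $V^i = v^i \sqrt{\dvol}$, $W^j = w^j \sqrt{\dvol}$ locally and using that $\langle V, \mathrm{Ric}(W)\rangle$ transcribes to $\sum_{k,l} R_{kl} v^k w^l \, \dvol$ as a density (this is the half-density/$1$-form dictionary of Remark \ref{3.5}), \eqref{pp3} reads $\int_M \langle \tfrac12 \partial_t g, V \otimes W\rangle = - \int_M \langle \Ric_{g(t)}, V \otimes W \rangle$ for all compactly supported Lipschitz $V, W$. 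Since $V \otimes W$ ranges over a set of sections of $S^2(TM) \otimes \mathcal{D}$ dense enough to separate continuous sections of the dual bundle (e.g. products $v \otimes w$ with $v, w$ arbitrary smooth compactly supported vector fields span pointwise all of $S^2(TM)$ after symmetrization, and we may freely symmetrize since $\partial_t g$ and $\Ric$ are symmetric), this integral identity holds for all such $V, W$ if and only if $\tfrac12 \partial_t g_{ij} = - \Ric_{g(t),ij}$ pointwise, i.e. \eqref{rf}.

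The main obstacle is not any single computation but the care needed in the two ``soft'' reductions: (i) justifying that equality of the $t$-integrals for \emph{all} $t \in [0,T)$ implies equality of the integrands in $s$ --- this uses continuity in $s$, which holds because $g$, $V$, $W$ and hence $\nabla V$, $\nabla W$ are continuous in $s$ on the compact support, so the ``only if'' direction is genuine and not merely formal; and (ii) the density/separation argument that upgrades the tested integral identity \eqref{pp3} to the pointwise tensor equation \eqref{rf}. For (ii) one should note that it is enough to test against $V, W$ supported in a single coordinate chart and of the form $v^k = c^k$, $w^l = d^l$ constant near a point (as in the proof of Proposition \ref{3.6}), which kills all derivative terms except at the level of the continuous tensor fields $\partial_t g$ and $\Ric$ evaluated at that point. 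Everything else --- the cancellation of the $\partial_t V, \partial_t W$ terms, the identification $-2Q(V,W) = -2\int \langle \Ric(W), V\rangle$ --- is bookkeeping already licensed by \eqref{3.17} and Remark \ref{3.5}.
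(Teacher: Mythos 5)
Your proof is correct and follows essentially the same route as the paper's: differentiate \eqref{5.2} in $t$, cancel the $\partial_t V$ and $\partial_t W$ terms, and identify the surviving term with $-2Q(V,W) = -2\int_M \sum_{i,j} R_{ij} V^i W^j$ via the Bochner identity \eqref{3.17}, the half-density formulation being exactly what removes any time derivative of the volume form. (One cosmetic slip: in your opening paragraph the labels ``if'' and ``only if'' are swapped relative to the two implications they describe, but both directions are argued correctly.)
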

\begin{proof}
Suppose that $(M, g(\cdot))$ is a Ricci flow solution. Then
\begin{equation} \label{5.3}
\frac{d}{dt} \sum_{ij} g_{ij}(t) V^i(t) W^j(t) =  
- 2 \sum_{ij}  R_{ij} V^i W^j +
\sum_{ij} g_{ij} (\partial_t V^i) W^j + 
\sum_{ij} g_{ij} V^i (\partial_t W^j).
\end{equation}
Integrating gives (\ref{5.2}).  Conversely, if (\ref{5.2}) holds then
by taking $V$ and $W$ smooth and differentiating in $t$, we see that
(\ref{5.3}) holds for all smooth $V$ and $W$.  This implies that
(\ref{rf}) holds.
\end{proof}

\subsection{Tame Ricci flow} \label{subsect5.2}

Now let $M$ be a $C^{1,1}$-manifold.

\begin{definition} \label{5.4}
Let $\{g(t)\}_{t \in [0, T)}$ be a one-parameter family of
$W^{1,1}_{loc}$-Riemannian metrics on $M$, in the sense of
Definition \ref{w11}, which is locally-$L^1$ 
on $M \times [0, T)$.
Suppose that for each 
$t \in [0, T)$, the Levi-Civita connection of $g(t)$ is tame in
the sense of Definition \ref{3.4}. 
Suppose that there is an integrable function $c : (0, T) \rightarrow
\R^+$ so that for all $t \in (0, T)$, the time-$t$ Ricci measure satisfies
$Q(V, W) \ge -c(t) \int_M g_{ij}(t) V^i W^j$.
Let $\{V(t)\}_{t \in [0,T)}$ and $\{W(t)\}_{t \in [0,T)}$
be one-parameter families of
vector-valued half-densities on $M$.
We assume that for each $T^\prime \in [0, T)$,
the family $V$ has compact support in $M \times [0, T^\prime]$ and
is Lipschitz there, and similarly for the family $W$.
We say that
$\{g(t)\}_{t \in [0, T)}$ is a {\em tame Ricci flow solution} if (\ref{5.2}) is
satisfied for all such $V$ and $W$, and all $t \in [0, T)$.

\end{definition}

\begin{example} \label{5.5}
Let $\{h(t)\}_{t \in [0, \infty)}$ be a smooth Ricci flow solution on 
$M$.  Given a $C^{1,1}$-diffeomorphism $\phi$ of $M$, put
$g(t) = \phi^* h(t)$. Then $\{g(t)\}_{t \in [0, T)}$ is a tame Ricci 
flow solution. This is because 
equation (\ref{5.2}) for $g$, $V$ and $W$, is equivalent to 
equation (\ref{5.2}) for $h$, $\phi_* V$ and $\phi_* W$. Hence
Proposition \ref{5.1} applies.
\end{example}

\begin{example} \label{5.6}
For all $t \ge 0$, let $g(t)$ be the metric of Example \ref{4.5}.
We claim that if $\alpha \neq 0$ then $\{g(t)\}_{t \in [0, \infty)}$ is
not a tame Ricci flow solution.  This can be seen by taking 
$V$ and $W$ to be time-independent in (\ref{5.2}).
\end{example}

We show that the property of being a tame Ricci flow solution passes to
Lipschitz limits.

\begin{proposition} \label{5.7}
Let $M$ be a $C^{1,1}$-manifold.  Let 
$\{g_i(\cdot)\}_{i=1}^\infty$ be a sequence of
one-parameter families of Riemannian metrics on $M$,
each defined for $t \in [0, T)$ and locally-$L^1$ on $M \times [0, T)$, 
with each $g_i(t)$ locally Lipschitz, 
satisfying (\ref{5.2}). 
Let $\{g_\infty(t)\}_{t \in [0, T)}$ be a one-parameter family of
locally Lipschitz Riemanniann metrics on $M$.
Suppose that for all $T^\prime \in [0, T)$ and every coordinate
neighborhood $U \subset M$ with compact closure,
\begin{equation} \label{5.8}
\lim_{i \rightarrow \infty} \sup_{t \in [0, T^\prime]}
d_{\Lip(U)}(g_i(t), g_\infty(t)) = 0.
\end{equation}
Then $g_\infty(\cdot)$ is a tame Ricci flow solution.
\end{proposition}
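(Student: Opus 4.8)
The plan is to check that $g_\infty(\cdot)$ satisfies all the conditions in Definition \ref{5.4} and then to pass to the limit in the integral identity (\ref{5.2}). The structural input I would isolate first is that uniform convergence in Lipschitz norm implies $L^\infty$-convergence of the Christoffel symbols. Fix $T^\prime \in [0,T)$ and a coordinate neighborhood $U$ with compact closure. Since $\Gamma^i_{\: \: jk}$ is a fixed smooth function of $(g_{ab}, \partial_c g_{ab})$ wherever $g$ is nondegenerate, and since all the metrics $g_i(t), g_\infty(t)$ with $t \le T^\prime$ may be taken uniformly bounded and uniformly nondegenerate on $\overline{U}$, the bound (\ref{5.8}) — together with the fact that $d_{\Lip(U)}$ controls the $L^\infty(U)$-norm of the (a.e.-defined) first derivatives — gives
\begin{equation*}
\lim_{i \to \infty} \sup_{t \in [0, T^\prime]} \| \Gamma_i(t) - \Gamma_\infty(t) \|_{L^\infty(U)} = 0 ,
\end{equation*}
where $\Gamma_i, \Gamma_\infty$ denote the Christoffel symbols of $g_i, g_\infty$; in particular these are uniformly bounded in $i$ and $t \le T^\prime$. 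By the transformation rule (\ref{3.13}), whose inhomogeneous term is independent of the metric and so cancels in the difference $\Gamma_i - \Gamma_\infty$, this is chart-independent.

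With this in hand, the standing hypotheses of Definition \ref{5.4} for $g_\infty$ are essentially automatic. Each $g_\infty(t)$ is locally Lipschitz, hence lies in $W^{1,1}_{loc}$, with Christoffel symbols in $L^\infty_{loc} \subset L^1_{loc}$ and $L^1_{loc}$ quadratic combinations, so its Levi-Civita connection is tame by Proposition \ref{3.6}; local-$L^1$-ness of $g_\infty$ on $M \times [0,T)$ follows by comparison with a fixed $g_N$ on $\overline{U} \times [0,T^\prime]$ using the uniform convergence above. For the lower bound, the integrable function $c$ of Definition \ref{5.4} that accompanies each $g_i$ may be taken common to all $i$, so $Q_i(V,W) \ge - c(t) \int_M g_{i,ij}(t) V^i W^j$; the $L^\infty$-convergence of the $\Gamma_i(t)$ gives $Q_i(V,W) \to Q_\infty(V,W)$, while $\int_M g_{i,ij}(t) V^i W^j \to \int_M g_{\infty,ij}(t) V^i W^j$ by uniform convergence on the compact support, so $Q_\infty(V,W) \ge - c(t) \int_M g_{\infty,ij}(t) V^i W^j$ and the time-$t$ Ricci measure of $g_\infty$ exists.

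It then remains to pass to the limit in (\ref{5.2}) for fixed $V$, $W$ and $t$. The two boundary terms converge by uniform convergence of $g_i$ on the compact supports of $V$ and $W$. For the two time-derivative terms, $g_i \to g_\infty$ uniformly on the compact support, uniformly in $s \le T^\prime$ and with uniform bounds, while $\partial_s V, \partial_s W$ are fixed; hence the integrands converge in $L^1(M)$ uniformly in $s$ and are dominated, so the $ds$-integral converges. For the two curvature terms, (\ref{3.2}) and the displayed limit show that the quantities $\nabla_i V^j$ computed via the Christoffel symbols of $g_i$ converge in $L^\infty(M)$, uniformly in $s \le T^\prime$ and with uniform bounds, to the ones computed via $g_\infty$; therefore the products $(\nabla_i V^i)(\nabla_j W^j)$ and $(\nabla_i V^j)(\nabla_j W^i)$ converge in $L^1(M)$ uniformly in $s$ and are dominated, and again the $ds$-integral converges. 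Thus $g_\infty$ satisfies (\ref{5.2}), and is a tame Ricci flow solution.

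The main obstacle I anticipate is precisely the first step: one needs $L^\infty$-convergence — not merely $L^1$-convergence — of the Christoffel symbols, because the curvature terms in (\ref{5.2}) (and the quadratic form $Q$) are quadratic in $\nabla V$, $\nabla W$, so only weak $L^1$ control of $\Gamma_i$ would not let one pass to the limit in the relevant products. The secondary technical point is the uniformity in $t$, needed to produce the dominating functions for the $\int_0^t ds$ integrals, and this is exactly the content of the uniform statement (\ref{5.8}).
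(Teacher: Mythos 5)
Your argument is correct and follows essentially the same route as the paper's (much terser) proof: locally Lipschitz metrics have $L^\infty_{loc}$ Christoffel symbols and hence tame Levi-Civita connections, the Lipschitz convergence \eqref{5.8} gives $L^\infty_{loc}$ convergence of those connections, and one then passes to the limit in \eqref{5.2}. The only divergence is that you also attempt to verify the lower-bound clause of Definition \ref{5.4} by positing a common integrable $c$ for all $i$ --- a condition not among the stated hypotheses --- whereas the paper's proof simply does not address that clause; this is an imprecision in the proposition as stated rather than a defect of your argument.
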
 
\begin{proof} 
From the convergence assumption, $g_\infty$ is locally-$L^1$ on
$M \times [0, T)$. The Christoffel symbols of 
$\{g_i(t)\}_{i=1}^\infty$ and $g_\infty(t)$ are all locally-$L^\infty$, so the
Levi-Civita connections are tame.  Because the Levi-Civita connections of
$\{g_i\}_{i=1}^\infty$ converge to that of 
$g_\infty$, in $L^\infty_{loc}$ on $M \times [0, T)$,
it follows that $g_\infty$ satisfies (\ref{5.2}). 
\end{proof}

\begin{example} \label{5.9}
Let $\{g(t)\}_{t \in [0, T)}$ be a smooth Ricci flow solution on a smooth
manifold $M$.
Let $\{\phi_i\}_{i=1}^\infty$ be a sequence of smooth diffeomorphisms of
$M$ that $C^{1,1}$-converge on compact subsets to a $C^{1,1}$-diffeomorphism
$\phi_\infty$ of $M$.  Then $\lim_{i \rightarrow \infty} \phi_i^* g(\cdot) =
\phi_\infty^* g(\cdot)$ with Lipschitz convergence on compact subsets of
$M \times [0, T)$, and $\phi_\infty^* g(\cdot)$ is a tame Ricci flow
solution.
\end{example}

We now give a compactness result for tame Ricci flow solutions.

\begin{proposition} \label{5.10}
Let $M$ be a $C^2$-manifold.  Let 
$\{g_i(\cdot)\}_{i=1}^\infty$ be a sequence of tame Ricci flow solutions,
defined for $t \in [0, T)$,
consisting of $C^1$-Riemannian metrics on $M$.
Suppose that 
for all $T^\prime \in [0, T)$ and every coordinate
neighborhood $U \subset M$ with compact closure, 
the $g_i(t)$'s are uniformly bounded above and below on 
$U \times [0, T^\prime]$, and the $g_i(t)$'s and their 
first spatial partial derivatives are 
uniformly bounded and
uniformly equicontinuous (in $i$) on $U \times [0, T^\prime]$.
Then after passing to a subsequence, there is a tame Ricci flow solution
$\{g_\infty(t)\}_{t \in [0, T)}$ on $M$ consisting of $C^1$-Riemannian metrics
so that the $g_i(t)$'s $C^1$-converge to $g_\infty(t)$, locally uniformly
on $M \times [0, T)$.
\end{proposition}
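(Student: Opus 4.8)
The plan is to extract a $C^1$-convergent subsequence by Arzel\`a--Ascoli, identify the limit $g_\infty$ as a $C^1$ Riemannian metric whose Levi--Civita connection is tame, pass to the limit in the defining identity (\ref{5.2}), and finally transfer the integrable lower Ricci bound from the $g_i$ to $g_\infty$. First I would fix an exhaustion of $M \times [0,T)$ by sets $U_k \times [0,T_k]$ with $U_k$ a coordinate neighborhood of compact closure and $T_k \uparrow T$. On each such set the hypotheses say that $\{g_i\}$ and $\{\partial_k g_i\}$ are uniformly bounded and uniformly equicontinuous, so by Arzel\`a--Ascoli together with a diagonal argument, after passing to a subsequence, $g_i \to g_\infty$ and $\partial_k g_i \to \partial_k g_\infty$ locally uniformly on $M \times [0,T)$. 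Then $g_\infty$ is $C^1$ in the space variables, jointly continuous in $(x,t)$, and uniformly nondegenerate by the two-sided bounds, so each $g_\infty(t)$ is a $C^1$ Riemannian metric. Moreover the Christoffel symbols satisfy $\Gamma^{g_i} \to \Gamma^{g_\infty}$ locally uniformly and $\Gamma^{g_\infty}$ is continuous, so in any coordinate neighborhood each $\Gamma^i_{\:\:jk}$ and each $\sum_{i,j}(\Gamma^i_{\:\:kl}\Gamma^j_{\:\:ji} - \Gamma^j_{\:\:ki}\Gamma^i_{\:\:lj})$ is locally bounded, hence locally integrable. Therefore $g_\infty$ lies in $W^{1,1}_{loc}$, is locally $L^1$ on $M \times [0,T)$, and for each $t$ the Levi--Civita connection of $g_\infty(t)$ is tame by Proposition \ref{3.6}.

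Next I would pass to the limit in (\ref{5.2}). Fix admissible families $V$, $W$ and $t \in [0,T)$; each $g_i$ satisfies (\ref{5.2}). In the two boundary terms the integrand has fixed compact support, $V$ and $W$ are fixed, and $g_i \to g_\infty$ uniformly there, so these converge to the corresponding terms for $g_\infty$. In the time integral the integrand is, by (\ref{3.2}), a finite sum of products of the bounded continuous functions $g_i$ and $\Gamma^{g_i}$ with the $L^\infty$ functions $V$, $W$ and their spatial and time derivatives; it is uniformly bounded, supported in a fixed compact subset of $M \times [0,t]$, and converges almost everywhere to the integrand for $g_\infty$ because $\Gamma^{g_i} \to \Gamma^{g_\infty}$ uniformly. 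By dominated convergence the time integral converges, so (\ref{5.2}) holds for $g_\infty$.

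It remains to produce an integrable $c_\infty : (0,T) \to \R^+$ so that $Q(V,V) \ge -c_\infty(t)\int_M g_{\infty,ij}(t) V^i V^j$ for all $V$. For a time-independent, spatially compactly supported Lipschitz $V$, applying (\ref{5.2}) to each $g_i$ with $W = V$ shows that $\psi_i(t) := \int_M g_{i,ij}(t) V^i V^j$ is absolutely continuous with $\psi_i'(t) = -2Q^{g_i(t)}(V,V)$; the $C^1$-bounds make $|\psi_i'|$ uniformly bounded and the two-sided bounds on $g_i$ keep $\psi_i$ between two positive constants, so $\psi_i \to \psi_\infty(t) = \int_M g_{\infty,ij}(t) V^i V^j$ uniformly. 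Definition \ref{5.4} gives $-Q^{g_i(t)}(V,V) \le c_i(t)\psi_i(t)$ with $c_i$ integrable; passing to a further subsequence so that the measures $c_i(t)\,dt$ converge weakly-$*$ on each $(0,T_k)$, diagonalizing, and testing this inequality against nonnegative functions of $t$ while using $Q^{g_i(\cdot)}(V,V) \to Q^{g_\infty(\cdot)}(V,V)$ boundedly and $\psi_i \to \psi_\infty$ uniformly, one obtains $Q^{g_\infty(t)}(V,V)\,dt \ge -\psi_\infty(t)\,d\mu$ for a nonnegative Radon measure $\mu$ on $(0,T)$ that does not depend on $V$; since the left side is absolutely continuous, $c_\infty := d\mu_{ac}/dt$ has the required property after taking the supremum over $V$, and $g_\infty$ is a tame Ricci flow solution.

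I expect the last step to be the main obstacle. The first three steps are a routine Arzel\`a--Ascoli and dominated-convergence package whose only structural input is that (\ref{5.2}) involves the Christoffel symbols but no higher derivatives of $g$, so that the $C^1$-convergence supplied by the equicontinuity hypothesis is precisely what is needed. By contrast, transferring the quantitative lower Ricci bound of Definition \ref{5.4} through the limit is what genuinely uses that the $g_i$ are tame Ricci flow solutions rather than merely that they satisfy (\ref{5.2}); the delicate point is to control the functions $c_i$ — for instance to establish a uniform $L^1_{loc}$ bound, which I would try to extract from (\ref{5.2}) together with the uniform $C^1$-bounds — so that the limiting measure $\mu$ is finite on compact subsets of $(0,T)$ and its absolutely continuous part is integrable.
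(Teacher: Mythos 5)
Your first three steps are, in expanded form, exactly the paper's proof: the paper extracts the subsequence by a diagonal (Arzel\`a--Ascoli) argument and then simply cites Proposition \ref{5.7}, whose proof consists of your steps two and three --- the Christoffel symbols of the $g_i$ converge to those of $g_\infty$ locally uniformly, the limiting connection is tame because its Christoffel symbols are locally bounded, and one passes to the limit in (\ref{5.2}) by dominated convergence. So for everything the paper actually verifies, your route is the same as the paper's.

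The divergence is your fourth step. You are right that Definition \ref{5.4} contains, besides the identity (\ref{5.2}), the clause that the time-$t$ Ricci measure satisfies $Q(V,W) \ge -c(t)\int_M g_{ij}(t)V^iW^j$ for some integrable $c$, and the paper never checks this clause for $g_\infty$ (the proof of Proposition \ref{5.7} only verifies (\ref{5.2})). Your attempt to check it, however, does not close: the weak-$*$ compactness of the measures $c_i(t)\,dt$ needs a uniform $L^1_{loc}$ bound on the $c_i$, and no such bound follows from the stated hypotheses. The uniform $C^1$ bounds give, for each fixed $V$, a uniform bound on $|Q^{g_i(t)}(V,V)|$, but that bound involves $\|\partial V\|_\infty$ and is not of the required form $c(t)\int_M g_{ij}V^iV^j$; and the functions $c_i$ furnished by Definition \ref{5.4} for the individual $g_i$ carry no uniformity in $i$. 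So as written your step four is the one genuinely incomplete step --- but it concerns a point that the paper's own proof silently skips, and which would require either reading the lower-bound clause as a standing hypothesis rather than part of the conclusion, or adding a uniformity assumption on the lower Ricci bounds of the $g_i$.
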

\begin{proof}
By a diagonal argument, after passing to a subsequence we can assume
that $\lim_{i \rightarrow \infty} g_i(\cdot) = g_\infty(\cdot)$ as stated
in the proposition.
Then Proposition \ref{5.7} shows that $g_\infty(\cdot)$ is a tame
Ricci flow solution.
\end{proof}

We now address when one can get a tame Ricci flow solution by appending
a time-zero slice to a smooth Ricci flow solution defined for positive time.

\begin{proposition} \label{5.11}
Let $\{g(t) \}_{t \in (0, T)}$ be a smooth Ricci flow solution
with $\Ric(g(t)) \ge - c(t) g(t)$ for some positive integrable function $c$.
Suppose that there is some $g(0) \in W^{1,1}_{loc}$ with tame
Levi-Civita connection so that
$\lim_{t \rightarrow 0^+} g(t) = g(0)$ in $L^1_{loc}$. Then
$\{g(t) \}_{t \in [0, T)}$ is a tame Ricci flow solution.
\end{proposition}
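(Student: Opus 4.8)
The plan is to verify the integral identity (\ref{5.2}) for all admissible $V$ and $W$, for every $t \in [0,T)$, where now $t$ is allowed to reach $0$. For $t > 0$ this is immediate: on $(0,T)$ the solution is a smooth Ricci flow, so Proposition \ref{5.1} gives (\ref{5.2}) whenever both endpoints of the integration interval lie in $(0,T)$. That is, for $0 < s_0 < t < T$,
\begin{equation*}
\int_M \sum_{ij} g_{ij}(t) V^i(t) W^j(t) - \int_M \sum_{ij} g_{ij}(s_0) V^i(s_0) W^j(s_0)
\end{equation*}
equals the corresponding time integral from $s_0$ to $t$. The whole content of the proposition is that one may let $s_0 \to 0^+$ and recover the stated identity with the time-zero slice $g(0)$ in place of $g(s_0)$ and with the time integral running from $0$.

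First I would handle the left-hand boundary term. Since $\lim_{s \to 0^+} g(s) = g(0)$ in $L^1_{loc}$ and $V, W$ are Lipschitz with compact support on $M \times [0,T']$ (so in particular $V(s), W(s)$ converge uniformly to $V(0), W(0)$ on the relevant compact set), the products $\sum_{ij} g_{ij}(s) V^i(s) W^j(s)$ converge in $L^1$ to $\sum_{ij} g_{ij}(0) V^i(0) W^j(0)$; integrating gives convergence of the boundary term. Next I would examine the time integral $\int_{s_0}^t \int_M [\,\cdots\,](s)\, ds$. The terms $g_{ij}(\partial_s V^i)W^j + g_{ij} V^i(\partial_s W^j)$ are harmless: the $g_{ij}(s)$ are uniformly $L^1$ on a compact neighborhood (by the local-$L^1$ hypothesis on $M \times [0,T)$) and the other factors are bounded, so these pieces are absolutely integrable in $s$ down to $0$ and the $s_0 \to 0^+$ limit is just the integral from $0$. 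The genuinely delicate terms are $-2(\nabla_i V^i)(\nabla_j W^j) + 2(\nabla_i V^j)(\nabla_j W^i)$, whose $M$-integral is exactly $-Q_{g(s)}(V(s),W(s))$ in the notation of (\ref{3.3}). By polarization and the lower bound hypothesis $\Ric(g(s)) \ge -c(s) g(s)$ — equivalently $Q_{g(s)}(V,V) \ge -c(s)\int_M g_{ij}(s)V^iV^j$ — together with the uniform local $L^1$ control on $g(s)$ and the boundedness of $V(s),W(s)$, one obtains a bound $|Q_{g(s)}(V(s),W(s))| \le C\, c(s)$ plus possibly an $L^1$-in-$s$ remainder coming from the "distributional Ricci" part $\int_M \langle V, \mathcal R_{g(s)} W\rangle$, which by Assumption \ref{ass} applied fiberwise is controlled by the same lower bound. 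Since $c$ is integrable on $(0,T)$, dominated convergence lets me take $s_0 \to 0^+$ in this term as well, producing the integral from $0$.

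Combining the three pieces yields (\ref{5.2}) for all $t \in [0,T)$. It remains only to check that $\{g(t)\}_{t \in [0,T)}$ meets the structural requirements of Definition \ref{5.4}: it is locally-$L^1$ on $M \times [0,T)$ by hypothesis (the $L^1_{loc}$ convergence at $t=0$ together with smoothness on $(0,T)$); each Levi-Civita connection is tame, by hypothesis at $t=0$ and by smoothness for $t>0$; and the Ricci lower bound with integrable $c$ is exactly what was assumed on $(0,T)$ and is vacuous (or trivially extendable) at the single time $t=0$. The main obstacle is the dominated-convergence argument for the $Q_{g(s)}(V(s),W(s))$ term near $s=0$: one must be careful that the Lipschitz-in-time regularity of $V$ and $W$, combined with only $L^1_{loc}$ control of $g(s)$ (not $L^1_{loc}$ of the Christoffel symbols uniformly in $s$), still gives a dominating function. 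The escape is that for $s$ bounded away from $0$ the metric is smooth and $Q_{g(s)}(V(s),W(s)) = \int_M\langle V(s),\Ric_{g(s)}W(s)\rangle$ is a bona fide integral bounded via the Ricci lower bound by $c(s)$ times a quantity that is bounded uniformly in $s$ (using the uniform $L^1_{loc}$ bound on $g(s)$), so the only growth as $s \to 0^+$ is through $c(s)$, which is integrable.
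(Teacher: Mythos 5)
Your proposal takes exactly the route of the paper's (very terse) proof: apply the smooth identity (\ref{5.2}) on $[t',t]$ for $t'>0$ and let $t'\to 0^+$, with the boundary term handled by the $L^1_{loc}$ convergence of $g(t')$ to $g(0)$ against the Lipschitz data. One caveat on your dominated-convergence step: the hypothesis $\Ric(g(s))\ge -c(s)g(s)$ gives only a \emph{one-sided} bound on the diagonal quantities $Q_{g(s)}(U,U)$ and does not yield the two-sided bound $|Q_{g(s)}(V(s),W(s))|\le C\,c(s)$ you assert (the Ricci curvature may be arbitrarily large and positive near $s=0$); the clean fix is to establish the identity first for $V=W=U$, where the time integrand is bounded above by an integrable function and the convergence of the improper integral then gives genuine $L^1$-integrability in $s$ by monotone convergence, and afterwards polarize in $(V,W)$.
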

\begin{proof}
Let $V$ and $W$ be one-parameter families as in Definition \ref{5.4}.
For any $t^\prime \in [0, T)$ and $t \in [t^\prime , T)$, the 
analog of (\ref{5.2}) holds with $0$ replaced by $t^\prime$.
Taking $t^\prime \rightarrow 0$ shows that (\ref{5.2}) holds.
\end{proof}

\begin{example} \label{5.12}
Let $\Sigma$ be a compact two-dimensional metric space with
curvature bounded in the Alexandrov sense. 
One can construct a Ricci flow
solution starting from $\Sigma$, in an certain sense, which will be smooth for
positive time \cite{Richard (2012)}. Using
\cite[Lemma 3.3]{Richard (2012)} and Proposition \ref{5.11}, 
we can extend the solution
back to time zero to get a
tame Ricci flow solution $g(\cdot)$ that exists on some
time interval $[0, T)$, with $\Sigma$ corresponding to $g(0)$.
\end{example}

\begin{example} \label{5.13}
Let $(\R^2, g_0)$ be a 
two-dimensional metric cone with total cone angle in $(0, 2 \pi]$.
There is a corresponding expanding soliton 
$\{g(t)\}_{t  > 0}$ with the property that
at any positive time, the tangent cone at infinity is isometric to
$(\R^2, g_0)$
\cite[Section 2.4]{Chow-Knopf (2004)}.
Putting $g(0) = g_0$, one obtains a tame Ricci flow solution
$\{g(t)\}_{t \ge 0}$.
\end{example}

\begin{remark} \label{5.14}
Let $g_0$ be a Lipschitz-regular Riemannian metric on a 
compact $C^{1,1}$-manifold $M$. Choose a compatible smooth structure on $M$.
From \cite[Theorem 5.3]{Koch-Lamm (2013)}, there is a smooth solution
$\{h(t)\}_{t \in (0, T)}$ to the DeTurck-Ricci flow 
with $\lim_{t \rightarrow 0^+} h(t) = g_0$.
Fixing $t_0 \in (0, T)$ and using the estimate in
\cite[Theorem 5.3]{Koch-Lamm (2013)},
we can integrate the vector field in the DeTurck trick backward in time
from $t_0$ to $0$, to obtain a homeomorphism $\phi$ of $M$. We can
think of $\phi$ as giving a preferred smooth structure based on the
Riemannian metric $g_0$.
We can also undo the vector field in the DeTurck trick, starting at
time $t_0$, to obtain a smooth Ricci flow solution 
$\{\widehat{g}(t)\}_{t \in (0, T)}$. Presumably $\phi$ is 
$C^{1,1}$
and there is a tame Ricci flow solution starting from $g_0$, given by
$g(t) = \phi^* \widehat{g}(t)$.
\end{remark}

\subsection{Cone-preserving Ricci flow} \label{subsect5.3}

There has been recent work about Ricci flow on certain singular
spaces with conical singularities along codimension-two strata, 
under the requirement that the
flow preserve the conical singularities
\cite{Liu-Zhang (2014),Mazzeo-Rubinstein-Sesum 
(2013),Phong-Song-Sturm-Wang (2014),Shen 
(2014),Shen (2014b),Yin (2010),Yin (2013)}.
As seen in Example \ref{5.6}, such solutions may not be tame Ricci flow
solutions.  However, one can consider an alternative and less
restrictive definition,
which we call {\em cone-preserving Ricci flow solutions}, in which
the $V$ and $W$ of Definition \ref{5.4} are 
additionally required to have square-integrable
covariant derivative.

\begin{example} \label{5.15}
For all $t \ge 0$, let $g(t)$ be the metric of Example \ref{4.5}.
We claim
that $\{g(t)\}_{t \in [0, \infty)}$ is a cone-preserving Ricci flow
solution. To see this, we can assume that $\alpha \neq 0$.
Suppose that $V$ is a compactly supported
Lipschitz vector-valued half-density.  Writing
$V = \sum_i v^i \partial_i \otimes \sqrt{dx^1 dx^2}$, one finds
$\nabla_i V = \sum_j \nabla_i v^j \otimes \sqrt{dx^1 dx^2}$, where
\begin{align} \label{5.16}
\nabla_1 v^1 = & \partial_1 v^1 + (\partial_2 \phi) v^2, \\
\nabla_1 v^2 = & \partial_1 v^2 - (\partial_2 \phi) v^1, \notag \\
\nabla_2 v^1 = & \partial_2 v^1 - (\partial_1 \phi) v^2, \notag \\
\nabla_2 v^2 = & \partial_2 v^2 + (\partial_1 \phi) v^1. \notag
\end{align}
Here $\phi(x^1, x^2) = - \frac{\alpha}{2} \log \left( (x^1)^2 + 
(x^2)^2 \right)$. 
Now the square norm of $\nabla V$ is
\begin{equation} \label{5.17}
\int_M \sum_{i,j,k,l} 
g_{jl} g^{ik} (\nabla_i v^j) (\nabla_k v^l) \: dx^1 dx^2 =
\int_M \sum_{i,j} (\nabla_i v^j)^2 \: dx^1 dx^2.
\end{equation}
Suppose that this is finite. As each $v^i$ is Lipschitz, and
$\partial_i \phi = - \alpha \frac{x^i}{(x^1)^2 + 
(x^2)^2}$, it follows that $v^i(0) = 0$. Then 
(\ref{4.6}) gives that $Q(V, W) = 0$.
Looking at (\ref{5.2}), the claim follows.
\end{example}

This example shows that there may be
nonuniqueness among cone-preserving Ricci flow
solutions with a given initial condition, in view of the expanding
soliton solution mentioned in Example \ref{5.13}.

We see that in this example, the
cone angle along the codimension-two stratum is unchanged.
If one wants to give a notion of a weak Ricci flow solution 
along these lines on a nonmanifold
space (which we do not address here), 
it is probably natural to impose the square integrability of
$\nabla V$ as a requirement.

\begin{example} \label{5.18}
For $k,n > 1$, consider $\C^n/\Z_k$, where the generator of $\Z_k$ acts 
isometrically on the flat 
$\C^n$ as multiplication by $e^{2 \pi i/k}$. 
We expect that with any reasonable definition
of a weak Ricci flow, this will give a static Ricci flow solution
(since Proposition \ref{4.16} indicates that the vertex of a cone
with a smooth link, in real dimension greater than two, should not
contribute to the Ricci measure).

On the other hand, there is an expanding soliton solution that exists
for $t > 0$, and whose $t \rightarrow 0$ limit is $\C^n/\Z_k$
\cite[Section 5]{Feldman-Ilmanen-Knopf (2003)}. 
This shows that one cannot expect a uniqueness result for weak Ricci flow
solutions whose time-zero slice is a nonmanifold, without further
restrictions.
\end{example}

\end{document}